\theoremstyle{plain}
\newtheorem{theorem}{Theorem}[section]
\newtheorem{proposition}[theorem]{Proposition}
\newtheorem{lemma}[theorem]{Lemma}
\theoremstyle{definition}
\newtheorem{remark}[theorem]{Remark}
\DeclareMathOperator{\Aut}{Aut}
\DeclareMathOperator{\disc}{disc}
\DeclareMathOperator{\End}{End}
\DeclareMathOperator{\GL}{GL}
\DeclareMathOperator{\SL}{SL}
\DeclareMathOperator{\Gal}{Gal}
\DeclareMathOperator{\ord}{ord}
\newcommand{\Ad}{{\mathbf{A}}}
\newcommand{\CC}{{\mathbf{C}}}
\newcommand{\FF}{{\mathbf{F}}}
\newcommand{\RR}{{\mathbf{R}}}
\newcommand{\Que}{{\mathbf{Q}}}
\newcommand{\Zee}{{\mathbf{Z}}}
\newcommand{\Zhat}{{\widehat {\Zee}}}
\newcommand{\Qbar}{{\bar{\Que}}}
\newcommand{\Ocal}{{\mathcal{O}}}
\newcommand{\Gotha}{{\mathfrak{A}}}
\newcommand{\gothp}{{\mathfrak{p}}}
\newcommand{\tto}{\longrightarrow}
\newcommand{\equi}{\Longleftrightarrow}
\newcommand{\iso}{\cong}
\newcommand{\mapright}[1]{\mathop{\longrightarrow}\limits^{#1}}
\newcommand{\ab}{\textup{ab}}
\newcommand{\mx}{\textup{max}}
\newcommand{\mn}{\textup{min}}
\newcommand{\tor}{\textup{tor}}
\newcommand{\cc}{\textup{cc}}
\newcommand{\cyc}{\textup{cyc}}
\newcommand\lowtilde{\lower0.7ex\hbox{\textasciitilde}}
\newcommand{\mybar}[1]{
  \mathchoice
  {#1\llap{$\overline{\phantom{\displaystyle\rm#1}}$}}
  {#1\llap{$\overline{\phantom{\textstyle\rm#1}}$}}
  {#1\llap{$\overline{\phantom{\scriptstyle\rm#1}}$}}
  {#1\llap{$\overline{\phantom{\scriptscriptstyle\rm#1}}$}}
}  
\renewcommand{\bar}{\mybar}
\begin{document}

\title[Adelic point groups of elliptic curves]
{Adelic point groups of elliptic curves}

\author[Angelakis]{Athanasios Angelakis} 

\author[Stevenhagen]{\hbox{Peter Stevenhagen}}

\address{Department of Mathematics,
         National Technical University of Athens, 
         9 Iroon Polytexneiou str.,
         15780 Zografou, 
         Attiki, Greece}

\address{Mathematisch Instituut,
         Universiteit Leiden, 
         Postbus 9512, 
         2300 RA Leiden, The Netherlands}
         
\email{ath.angelakis@gmail.com}
\email{psh@math.leidenuniv.nl}

\date{\today}
\keywords{Elliptic curves, adelic points, Galois representation}

\subjclass[2010]{Primary 11G05, 11G07; Secondary 11F80}

\begin{abstract}
We show that for an elliptic curve $E$ defined over a number field~$K$,
the group $E(\Ad_K)$ of points of $E$ over the adele ring 
$\Ad_K$ of $K$ is a topological group that can be analyzed
in terms of the Galois representation associated to the torsion points of $E$.
An explicit description of $E(\Ad_K)$ is given, and we prove that
for $K$ of degree $n$, `almost all' elliptic curves over $K$ have
an adelic point group topologically isomorphic to
$$
(\RR/\Zee)^n\times\Zhat^n\times\prod_{m=1}^\infty \Zee/m\Zee.
$$
We also show that there exist infinitely many elliptic curves over $K$
having a different adelic point group.
\end{abstract}

\maketitle

\section{Introduction}
\label{S:intro}

\noindent
Since the early 20th century, it has been a standard technique
to study number fields~$K$ in terms of their completions $K_\gothp$
at primes $\gothp$, both finite and infinite.
In the 1940s, all these completions were combined by Chevalley
in the adele ring $\Ad_K$ of~$K$.
This is a restricted direct product of completions, with
the integrality restriction in place to make $\Ad_K$ locally compact,
a property that all completions $K_\gothp$ have, and that is
essential in harmonic analysis.
The adele ring and its unit group, the idele group, play
an essential role in Tate's derivation of the functional equations
of Hecke $L$-functions and the idelic formulation of class field theory
\cite{CasselsFroehlich}.

It is a natural question whether $\Ad_K$ characterizes the number field~$K$,
i.e., whether non-isomorphic number fields can have topologically
isomorphic adele rings. 
Given the direct relation of $\Ad_K$ to basic invariants of
the number field such as the zeta function $\zeta_K$ of $K$ and
and the absolute abelian Galois group $G_K^\ab$ of~$K$,
this question neatly fits in a series of identical questions 
for $\zeta_K$, for $G_K^\ab$ and for the absolute Galois group $G_K$ itself.
It turns out that, whereas the topological group $G_K$ does 
characterize $K$ up to isomorphism by the 
Neukirch-Uchida theorem \cite{NeukirchSchmidtWingberg}*{12.2.1},
its maximal abelian quotient $G_K^\ab$, the adele ring $\Ad_K$
and the zeta function $\zeta_K$ do not; see \cite{Ang}*{Section 1.4}
for an extensive discussion and bibliography.

Non-isomorphic number fields with identical zeta-functions 
or isomorphic adele rings are very rare, and they do not exist
for small degrees.
Finding non-isomorphic number fields with isomorphic
absolute abelian Galois groups is even harder, and so far it has only been
achieved for imaginary quadratic fields.
Somewhat surprisingly, even though there are infinitely
many isomorphism types of $G_K^\ab$ for imaginary
quadratic $K$, we know that,
at least conjecturally \cite{AngSt}*{Conjecture 7.1},
many of them share the same `minimal' isomorphism type.

For elliptic curves $E$ defined over a number field $K$,
it is also standard to view them over the completions $K_\gothp$
to study their reduction properties, but less so over the adele ring $\Ad_K$,
maybe because it is not a field.
Still, it is a perfectly natural question what the
\emph{adelic point group} $E(\Ad_K)$ of $E$ over $\Ad_K$
looks like, and to which extent it characterizes $E/K$.
In view of Lemma~\ref{lemma:fullprod}, we can define it as the product
\begin{equation}
\label{eq:apg}
E(\Ad_K) = \prod_{\gothp\le\infty} E(K_\gothp)
\end{equation}
of the point groups of $E$ over all completions of $K$,
both finite and infinite.
This yields an uncountable abelian group that contains all
$\gothp$-adic point groups as subgroups, and continuously surjects onto the
point groups $\bar E(k_\gothp)$ at all primes of good reduction.
It is in a natural way a compact topological group.

In view of the fact that elliptic curves $E/K$ still give rise to basic
open questions such as the effective computation of $E(K)$,
the finiteness of its Tate-Shafarevich group, and the `average behavior'
of $E(K)$, it may come as a surprise that the adelic point group
$E(\Ad_K)$ not only admits a very explicit description
as a compact topological group, but that this description is also
almost universal in the sense that most $E$ over a given number field $K$
give rise to the \emph{same} adelic point group.

\begin{theorem}
\label{thm:almostall}
Let $K$ be a number field of degree $n$.
Then for almost all elliptic curves $E/K$, the adelic point group
$E(\Ad_K)$ is topologically isomorphic to the universal group
$$
  \mathcal{E}_n= (\RR/\Zee)^n\times \Zhat^n\times \prod_{m=1}^\infty \Zee/m\Zee
$$
associated to the degree $n$ of $K$.
\end{theorem}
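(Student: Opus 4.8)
\emph{Overall strategy.}
Following the product description \eqref{eq:apg}, the plan is to compute the local factors $E(K_\gothp)$ separately, recombine them, and show that for almost all $E$ the answer is $\mathcal E_n$ \emph{up to finitely many finite abelian factors}, which may then be discarded. Write $n=r_1+2r_2$. At a complex place $E(K_\gothp)\cong(\RR/\Zee)^2$, and at a real place $E(K_\gothp)$ is $\RR/\Zee$ or $(\RR/\Zee)\times\Zee/2\Zee$ according to the sign of $\Delta_E$; hence $\prod_{\gothp\mid\infty}E(K_\gothp)\cong(\RR/\Zee)^n\times(\Zee/2\Zee)^s$ with $0\le s\le r_1$. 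At a finite prime $\gothp$ above $p$, the kernel-of-reduction subgroup $E^1(K_\gothp)$ is a formal-group point group, topologically $\Zee_p^{[K_\gothp\col\Que_p]}$ up to a finite $p$-group which is trivial for $e_\gothp<p-1$, hence for all but finitely many $\gothp$; moreover $E(K_\gothp)$ is a finitely generated $\Zee_p$-module, so $E(K_\gothp)\cong\Zee_p^{[K_\gothp\col\Que_p]}\times E(K_\gothp)^\tor$ as topological groups, with $E(K_\gothp)^\tor$ finite and, via reduction, agreeing with $\bar E(k_\gothp)$ outside the $p$-part. Since $\sum_{\gothp\mid p}[K_\gothp\col\Que_p]=n$, grouping by $p$ gives $\prod_{\gothp\nmid\infty}\Zee_p^{[K_\gothp\col\Que_p]}=\Zhat^n$, and therefore
\[
  E(\Ad_K)\;\cong\;(\RR/\Zee)^n\times\Zhat^n\times\Bigl(\textstyle\prod_{\gothp\nmid\infty}E(K_\gothp)^\tor\Bigr)\times(\mathrm{finite}),
\]
the finitely many bad-reduction primes and the finitely many $\gothp$ with $e_\gothp\ge p-1$ having been absorbed into the last factor. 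It remains to identify $\prod_{\gothp\nmid\infty}E(K_\gothp)^\tor$.

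\emph{An absorption principle and reduction to a multiplicity count.}
The group $\prod_{m\ge1}\Zee/m\Zee$ is profinite, hence the product of its $\ell$-Sylow subgroups, and a short computation with $v_\ell(m)$ shows its $\ell$-part is $\prod_{k\ge1}(\Zee/\ell^k\Zee)^{(\aleph_0)}$: every finite cyclic $\ell$-group occurs with countably infinite multiplicity. By Pontryagin duality together with Pr\"ufer's classification of countable direct sums of cyclic groups, a countable product $\prod_i A_i$ of finite abelian groups is topologically isomorphic to $\prod_{m\ge1}\Zee/m\Zee$ exactly when, for every prime $\ell$ and every $k\ge1$, the group $\Zee/\ell^k\Zee$ occurs with infinite multiplicity in $\prod_i(A_i)_\ell$. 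In particular the isomorphism type is unaffected by inserting finitely many finite factors, which legitimises all the ``up to finite'' steps above. Each $E(K_\gothp)^\tor$ embeds in $(\Que_\ell/\Zee_\ell)^2$ in every $\ell$-part, hence is generated by at most two elements, and for $\gothp\nmid\ell$ one has $E(K_\gothp)[\ell^\infty]\cong\bar E(k_\gothp)[\ell^\infty]$ while the finitely many $\gothp\mid\ell$ contribute only finite factors. So the theorem reduces to: \emph{for almost all $E/K$, every prime $\ell$, and every $k\ge1$, there are infinitely many primes $\gothp$ of good reduction, $\gothp\nmid\ell$, with $\bar E(k_\gothp)[\ell^\infty]$ cyclic of order $\ell^k$.}

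\emph{The Galois-theoretic input.}
For such a prime $\gothp$ the Tate-module computation (snake lemma applied to multiplication by $\rho_{E,\ell}(\mathrm{Frob}_\gothp)-1$) gives $\bar E(k_\gothp)[\ell^\infty]\cong\Zee_\ell^2/(M-1)\Zee_\ell^2$ with $M=\rho_{E,\ell}(\mathrm{Frob}_\gothp)\in\GL_2(\Zee_\ell)$, where $\rho_{E,\ell}\col\Gal(\bar K/K)\to\GL_2(\Zee_\ell)$ is the $\ell$-adic torsion representation. If $\mathrm{diag}(\ell^a,\ell^b)$ is the Smith normal form of $M-1$ then $\bar E(k_\gothp)[\ell^\infty]\cong\Zee/\ell^a\Zee\times\Zee/\ell^b\Zee$, so cyclicity of order $\ell^k$ means $a=0$, $b=k$, i.e.\ $M\not\equiv1\bmod\ell$ and $v_\ell(\det(M-1))=k$ (note $\det(M-1)=\#\bar E(k_\gothp)$, recovering the Hasse count). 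Now for almost all $E/K$ — all but a density-zero set when curves are ordered by the height of a Weierstrass model, via the analogue over $K$ of the genericity of Serre curves over $\Que$ — the representation $\rho_{E,\ell}$ is surjective for every $\ell$. For such $E$ the set of $M\in\GL_2(\Zee_\ell)$ with $M\not\equiv1\bmod\ell$ and $v_\ell(\det(M-1))=k$ is open, conjugation-stable, and nonempty (e.g.\ $\mathrm{diag}(u,1+\ell^k)$ with $u\not\equiv1\bmod\ell$), hence of positive Haar measure; the Chebotarev density theorem applied to $\rho_{E,\ell}$ then produces infinitely many $\gothp$ with $\mathrm{Frob}_\gothp$ in it. Combining the three parts and absorbing all finite corrections into the $\prod_{m\ge1}\Zee/m\Zee$ factor yields $E(\Ad_K)\cong\mathcal E_n$.

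\emph{The main obstacle.}
The local structure theory, the absorption bookkeeping, and the Chebotarev step are routine; the crux is the Galois-image input. One must know that for almost all $E/K$ the mod-$\ell$ representations are simultaneously surjective for \emph{all} $\ell$, so that no Frobenius ``shape'' is missing, and one must make this density statement precise over an arbitrary number field $K$ — equivalently, one should identify exactly which finite amount of torsion-in-division-field data of $E$ governs whether $E(\Ad_K)\cong\mathcal E_n$, and check that the favourable configuration is generic. This is also where the hypothesis is genuinely needed: curves failing the relevant condition — CM curves in particular — yield different adelic point groups, which accounts for the infinitely many exceptions mentioned in the abstract.
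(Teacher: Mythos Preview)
Your approach is essentially the paper's: the local structure theory yields $E(\Ad_K)\cong(\RR/\Zee)^n\times\Zhat^n\times T_{E/K}$ with $T_{E/K}$ a countable product of finite abelian groups, and everything comes down to showing that $T_{E/K}$ has infinite $\ell^k$-rank for every prime power $\ell^k$, which both you and the paper establish via Chebotarev together with the Jones/Zywina results on Galois images. The one substantive difference is in \emph{which} primes are used to witness cyclic summands of order $\ell^k$: the paper (Lemma~\ref{lemma:divfieldcriterion}) takes primes splitting completely in $Z_{E/K}(\ell^k)$ but not in $Z_{E/K}(\ell^{k+1})$, so that $T_\gothp$ contains $(\Zee/\ell^k\Zee)^2$ but not $(\Zee/\ell^{k+1}\Zee)^2$; you instead take primes where $\bar E(k_\gothp)[\ell^\infty]$ is \emph{cyclic} of order exactly $\ell^k$, read off from the Smith normal form of $\rho_{E,\ell}(\mathrm{Frob}_\gothp)-1$. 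Both are valid, and your absorption bookkeeping is equivalent to the paper's $\ell^k$-rank formalism.

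There is, however, a small but genuine gap in your Galois-image step. You assert that for almost all $E/K$ the representation $\rho_{E,\ell}$ is surjective onto $\GL_2(\Zee_\ell)$ for every $\ell$. For $K\ne\Que$ what Zywina actually proves (Theorem~\ref{thm:zywina}) is that the global image equals $\det^{-1}[H_K]$ with $H_K=\Gal(K^\cyc/K)\subset\Zhat^*$; the projection to $\GL_2(\Zee_\ell)$ is then surjective only when $K\cap\Que(\zeta_{\ell^\infty})=\Que$, which fails whenever $\zeta_\ell\in K$. In that case your witness $\mathrm{diag}(u,1+\ell^k)$ with $u\not\equiv1\bmod\ell$ need not lie in the image, since its determinant may not land in the $\ell$-component of $H_K$. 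The repair is immediate and is precisely what the paper does: the image always contains $\SL_2(\Zee_\ell)$, and inside $\SL_2(\Zee_\ell)$ one easily exhibits matrices with $M\not\equiv1\bmod\ell$ and $v_\ell(\det(M-1))=v_\ell(2-\mathrm{tr}\,M)=k$, for instance $M=\bigl(\begin{smallmatrix}1&1\\ \ell^k&1+\ell^k\end{smallmatrix}\bigr)$. Finally, your closing remark that CM curves ``in particular'' account for the infinitely many exceptions is misleading: the non-generic examples in Theorem~\ref{thm:exceptions} arise from division-field coincidences such as $Z_{E/\Que}(2)=Z_{E/\Que}(4)$ (Proposition~\ref{prop:z2isz4}), not from complex multiplication, and CM curves can perfectly well have $E(\Ad_K)\cong\mathcal E_n$.
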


\noindent
The notion of `almost all' in Theorem \ref{thm:almostall} is the same as in
\cite{Zywina}, and is based on the
counting of elliptic curves over $K$ given by short affine Weierstrass models 
\begin{equation}
\label{eq:E_a,b}
E_{a,b}:
y^2= x^3+ax+b
\end{equation}
with integral coefficients $a, b\in \Ocal_K$
satisfying $\Delta_E=\Delta(a,b)=-16 (4a^3+27b^2)\ne0$.
To define it, we fix a norm $||.||$ on the real vector space
$\RR\otimes_\Zee\Ocal_K^2\iso \RR^{2n}$ in which
$\Ocal_K^2$ embeds as a lattice.
Then for any positive real number $X$, the set $S_X$ of elliptic curves
$E_{a,b}$ with $||(a,b)||<X$ is finite, and we
say that \emph{almost all} elliptic curves over $K$ have 
some property if the fraction of elliptic curves $E_{a,b}$ in $S_X$
having that property tends to 1 when $X\in\RR_{>0}$ tends to infinity.

As the elliptic curves $E/K$ having complex multiplication (CM)
over $\overline K$ have their $j$-invariants inside a finite 
subset of $K$, almost all elliptic curves over $K$ are without CM.
This allows us to disregard CM-curves in the proof of Theorem
\ref{thm:almostall}, but we will see in Remark~\ref{rem:cmcase}
that this distinction is hardly relevant.

\medskip\noindent
Our proof of Theorem \ref{thm:almostall} is in 3 steps.
The first, in Section~\ref{S:adelicpointgroups},
only uses the standard theory of elliptic curves from \cite{SJH}.
It shows that the
connected component $E_{\cc}(\Ad_K)$ of the zero
element is a subgroup of $E(\Ad_K)$ isomorphic to $(\RR/\Zee)^n$,
and that it splits off in the sense that we have a decomposition
$$
E(\Ad_K) \iso E_{\cc}(\Ad_K) \times E(\Ad_K)/E_{\cc}(\Ad_K).
$$
The totally disconnected group $E(\Ad_K)/E_{\cc}(\Ad_K)$ is profinite,
and can be analyzed by methods resembling those we employed for 
the multiplicative group $\Ad_K^*$ in the class field theoretic 
setting of \cite{AngSt}.
It fits in a split exact sequence
$$
0 \to T_{E/K} \to E(\Ad_K)/E_{\cc}(\Ad_K) \to \Zhat^n \to 0
$$
of $\Zhat$-modules, with $n$ the degree of $K$.
Here $T_{E/K}$ is the closure of the torsion subgroup of
$E(\Ad_K)/E_{\cc}(\Ad_K)$, 
and we can write $E(\Ad_K)$ as a product
\begin{equation}
\label{eq:adelicdecomp}
E(\Ad_K) \iso (\RR/\Zee)^n\times \Zhat^n\times T_{E/K}
\end{equation}
in which only $T_{E/K}$ depends on the choice of the particular 
elliptic curve $E$ over~$K$.

In a second step, we show in Section \ref{S:torsion} that
the \emph{torsion closure} $T_{E/K}$, which is a countable product
of finite cyclic groups, is isomorphic to
$\prod_{m=1}^\infty \Zee/m\Zee$ for those $E$ that satisfy a
condition 
in terms of the division fields associated to $E/K$.
Whether this condition is satisfied can be read off from the Galois
representation associated to the torsion points of~$E$.
The final step concluding the proof of Theorem
\ref{thm:almostall}, in Section~\ref{S:universality},
uses recent results of Jones and Zywina \cites{Jones, Zywina}
to show that this condition is met for almost all elliptic curves over~$K$.

The notion of `almost all' from Theorem \ref{thm:almostall}
still allows for many $E/K$ having adelic point groups \emph{different} from 
the universal group~$\mathcal{E}_n$.
Such non-generic adelic point groups can be characterized by a finite set
of prime powers $\ell^k$ for which cyclic direct summands
of order $\ell^k$ are `missing' from $T_{E/K}$.
It is easy (Lemma \ref{lemma:elk=0}) to produce elliptic curves 
for which $E(\Ad_K)$ has prescribed missing summands by base changing 
any given elliptic curve to an appropriate extension field.
It is much harder to construct elliptic curves
with non-generic adelic point groups over a 
\emph{given} number field $K$.
Theorem \ref{thm:finite-ellk} shows that,
for given $K$, there are only finitely many prime powers $\ell^k$
for which cyclic direct summands of order~$\ell^k$ can be
missing from adelic point groups of elliptic curves $E/K$.
For $K=\Que$, the only prime power is $\ell^k=2$, and this
is used in Theorem \ref{thm:Efam} to prove an explicit version of the following
result.
\begin{theorem}
\label{thm:exceptions}
Let $K$ be a number field of degree $n$. Then there exist
infinitely many elliptic curves $E/K$ that are pairwise non-isomorphic
over an algebraic closure of~$K$, and for which $E(\Ad_K)$ is
a topological group not isomorphic to $\mathcal{E}_n$.
\end{theorem}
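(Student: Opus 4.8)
The plan is to reduce the statement, via the structure theory of Sections~\ref{S:adelicpointgroups}--\ref{S:torsion}, to the construction of a single one-parameter family of elliptic curves over $\Que$, and then to transport that family to an arbitrary~$K$ by base change. Recall that $E(\Ad_K)\iso(\RR/\Zee)^n\times\Zhat^n\times T_{E/K}$ with $T_{E/K}$ a countable product of finite cyclic groups, so $E(\Ad_K)\iso\mathcal{E}_n$ holds exactly when $T_{E/K}\iso\prod_{m\ge1}\Zee/m\Zee$, which by the criterion following Lemma~\ref{lemma:divfieldcriterion} fails precisely when, for some prime power~$\ell^k$, only finitely many of the finite groups $\bar E(k_\gothp)$ admit $\Zee/\ell^k\Zee$ as a direct summand. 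Whether $\Zee/\ell^k\Zee$ splits off $\bar E(k_\gothp)$ is determined by the action of $\mathrm{Frob}_\gothp$ on $E[\ell^{k_0}]$ for a bounded exponent $k_0=k_0(\ell^k)$ --- this is where the Weil pairing, the value of $\#k_\gothp$, and the division fields $K(E[\ell^i])$ enter --- so by Chebotarev, ``$\ell^k$ is missing over $K$'' is equivalent to the image of $\rho_{E,\ell^{k_0}}\colon\Gal(\bar K/K)\to\GL_2(\Zee/\ell^{k_0}\Zee)$ avoiding a fixed conjugacy-stable subset $B=B_{\ell^k}$; in particular it holds as soon as this image is contained, up to conjugacy, in a fixed proper subgroup~$H$ with $-I\in H$ and $H\cap(B\cup(-B))=\varnothing$. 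Two features of this last condition are decisive: it is invariant under quadratic twisting (because $-I\in H$), hence a condition on $j(E)$ alone, cutting out a modular curve $X_H$ of level~$\ell^{k_0}$; and it is inherited under base change, since $K(E[\ell^i])=K\cdot\Que(E[\ell^i])$ only shrinks the relevant image. It therefore suffices to produce, with $\ell^k=2$ --- which by Theorem~\ref{thm:finite-ellk} is the only prime power that can be missing over $\Que$ --- infinitely many elliptic curves $E/\Que$ with pairwise distinct $j$-invariants and $\mathrm{im}\,\rho_{E,4}\subseteq H$.

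Here I expect the bound $k_0=2$ to suffice for $\ell^k=2$: whether $\Zee/2\Zee$ splits off $\bar E(k_\gothp)$ should depend only on $\mathrm{Frob}_\gothp$ modulo~$4$, since it is governed by which of $E[2]$ and $E[4]$ are rational over $k_\gothp$ together with $\#k_\gothp\bmod 4$, and the latter is the determinant of $\mathrm{Frob}_\gothp$ modulo~$4$. Consequently $B\subseteq\GL_2(\Zee/4\Zee)$, the curve $X_H$ is dominated by the genus-zero curve $X(4)$ and so itself has genus~$0$; provided $\det H=(\Zee/4\Zee)^*$ it is geometrically connected and defined over $\Que$, it carries a rational cusp, and hence $X_H\iso\PP^1_{\Que}$, so $X_H(\Que)$ is infinite. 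The forgetful morphism $X_H\to X(1)\iso\PP^1_j$ is nonconstant, hence finite, so it takes infinitely many distinct values on $X_H(\Que)$; discarding the finitely many cusps and the values $j\in\{0,1728\}$, each remaining $j\in\Que$ is realized by an elliptic curve $E/\Que$ with $E(\Ad_\Que)\not\iso\mathcal{E}_1$, and these curves are pairwise non-isomorphic over $\bar\Que$ because their $j$-invariants are distinct. Base changing this family to an arbitrary number field $K$ of degree $n$, the image of $\rho_{E,4}$ over~$K$ is contained in its image over $\Que$ and hence still in~$H$, so $E(\Ad_K)\not\iso\mathcal{E}_n$ by the same criterion over~$K$, while the $j$-invariants --- now viewed in $\bar K=\bar\Que$ --- remain pairwise distinct; this proves the theorem, and the explicit version of the family, written as a one-parameter Weierstrass equation, is Theorem~\ref{thm:Efam}.

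The main obstacle is the input used in the first paragraph: pinning down the conjugacy-stable ``bad set'' $B=B_2\subseteq\GL_2(\Zee/4\Zee)$ --- determining exactly which $\mathrm{Frob}_\gothp$-classes force a cyclic summand of order~$2$ into $\bar E(k_\gothp)$ --- and then exhibiting a proper subgroup $H\le\GL_2(\Zee/4\Zee)$ with $-I\in H$, $\det H=(\Zee/4\Zee)^*$ and $H\cap(B\cup(-B))=\varnothing$, which is the crux of the proof of Theorem~\ref{thm:Efam}. This requires the careful analysis of the $2$-division structure of reductions carried out in Section~\ref{S:torsion}, used to convert the abstract ``missing summand'' condition of Lemma~\ref{lemma:divfieldcriterion} into the concrete level-$4$ Galois condition. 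Everything else --- that $X_H$ has genus~$0$, that $j$ is nonconstant on it, that only finitely many fibres must be discarded, and that the condition is twist-stable and inherited under base change --- is then routine.
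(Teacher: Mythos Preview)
Your route through Chebotarev and modular curves is genuinely different from the paper's, which is entirely explicit: Proposition~\ref{prop:z2isz4} writes down the one-parameter family $E_r$ with $Z_{E_r/\Que}(2)=Z_{E_r/\Que}(4)=\Que(i)$, and Theorem~\ref{thm:Efam} then checks \emph{by hand}, completion by completion, that every $T_\gothp$ for $E_r/K$ has $2$-rank~$0$. No modular curve or density argument appears; at a finite prime~$\gothp$ one simply distinguishes whether $i\in K_\gothp$ (then $E_r[4]\subset E_r(K_\gothp)$, so no summand of exact order~$2$) or $i\notin K_\gothp$ (then an explicit $P+P^\sigma$ computation gives $E_r(K_\gothp)[4]\cong\Zee/4\Zee$), while the archimedean primes are handled by the sign of~$\Delta(E_r)$.

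Your abstract framework, however, has a real gap: the two requirements $-I\in H$ and $H\cap B=\varnothing$ are mutually exclusive, because $-I\in B$. Indeed, if $\mathrm{Frob}_\gothp\equiv -I\pmod{4}$ then $F-I\equiv 2I\pmod 4$, so $F-I=2U$ with $U\in\GL_2(\Zee_2)$, and the $2$-part of $\bar E(k_\gothp)$ is $\Zee_2^2/(F-I)\Zee_2^2\cong(\Zee/2\Zee)^2$, which does have $\Zee/2\Zee$ as a direct summand. In particular the property ``$T_{E/K}$ has finite $2$-rank'' is \emph{not} twist-invariant: twisting the paper's $E_r$ by any quadratic character not factoring through $\Gal(\Que(i)/\Que)$ enlarges the mod-$4$ image from $\{I,c\}$ to $\{\pm I,\pm c\}\ni -I$, and the twist acquires infinite $2$-rank. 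So the construction cannot be encoded by a modular curve $X_H$ with $-I\in H$; the image one actually needs is $\{I,c\}$ with $c$ conjugate to $\left(\begin{smallmatrix}0&1\\1&0\end{smallmatrix}\right)$, which avoids $B$ but omits~$-I$. This is why the paper bypasses the moduli-theoretic packaging you propose and instead produces the family explicitly and verifies the local condition directly.
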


\section{Structure of adelic point groups}
\label{S:adelicpointgroups}

\noindent
Let $E$ be an elliptic curve over a number field $K$.
As $\Ad_K$ is a $K$-algebra inside the full product
$\prod_{\gothp\le\infty} K_\gothp$,
the adelic point group $E(\Ad_K)$ 
naturally embeds into $\prod_{\gothp\le\infty} E(K_\gothp)$.
The justification for our definition \eqref{eq:apg} is the following.
\begin{lemma}
\label{lemma:fullprod}
The natural inclusion $E(\Ad_K)\tto \prod_{\gothp\le\infty} E(K_\gothp)$
is an isomorphism.
\end{lemma}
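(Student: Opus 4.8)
The key point is that the adele ring $\Ad_K = \prod'_{\gothp} (K_\gothp, \Ocal_\gothp)$ is a restricted product, so that an element $(x_\gothp)_\gothp \in \prod_\gothp K_\gothp$ lies in $\Ad_K$ precisely when $x_\gothp \in \Ocal_\gothp$ for all but finitely many finite primes $\gothp$. An $\Ad_K$-valued point of $E$ is, by definition of the functor of points, a point of the scheme $E \times_K \operatorname{Spec} \Ad_K$, and since $E$ is of finite type over $K$ one can compute this using a covering by affine charts; concretely, every $\Ad_K$-point is represented by projective coordinates $(x:y:z)$ with entries in $\Ad_K$ satisfying the Weierstrass equation. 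The task is thus to show that any compatible family $(P_\gothp)_\gothp$ with $P_\gothp \in E(K_\gothp)$ actually "assembles" into such an $\Ad_K$-point, i.e.\ that the coordinates can be chosen to be adelic.

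The plan is as follows. First I would fix a projective Weierstrass model for $E$ over $\Ocal_K$, as in \eqref{eq:E_a,b}, so that $E$ has good reduction at all finite primes $\gothp$ outside a finite set $S$ containing the primes dividing $\Delta_E$. Second, for a prime $\gothp \notin S$ I would invoke the standard fact (reduction mod $\gothp$ for an integral model with good reduction) that every point $P_\gothp \in E(K_\gothp)$ has a representative in $\Ocal_\gothp$-coordinates that is \emph{primitive}, meaning at least one coordinate is a unit in $\Ocal_\gothp$; equivalently $P_\gothp \in E(\Ocal_\gothp)$ and this is the full $K_\gothp$-point group since $E/\Ocal_\gothp$ is proper (the valuative criterion). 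Third, given an arbitrary family $(P_\gothp)_\gothp \in \prod_\gothp E(K_\gothp)$, for each of the finitely many primes in $S$ and the infinite primes just pick any $K_\gothp$-coordinate representative, and for $\gothp \notin S$ pick the primitive $\Ocal_\gothp$-representative; the resulting coordinate vectors then have all entries in $\Ocal_\gothp$ for almost all $\gothp$, hence lie in $\Ad_K$, exhibiting $(P_\gothp)_\gothp$ as the image of a genuine $\Ad_K$-point. Conversely the map $E(\Ad_K) \to \prod_\gothp E(K_\gothp)$ is injective because $\Ad_K \hookrightarrow \prod_\gothp K_\gothp$ is injective and a point of projective space over a ring is determined by its images over the factors up to the usual scaling, which here is rigidified by the primitivity normalization at almost all places.

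The main obstacle is a mild bookkeeping issue rather than a deep one: one must be a little careful about how $E(\Ad_K)$ is defined as a functor-of-points on a non-local, non-Noetherian-in-the-naive-sense ring, and in particular about the fact that a projective point over $\Ad_K$ is an equivalence class of primitive coordinate vectors (a line bundle on $\operatorname{Spec}\Ad_K$ trivialized suitably), not literally a single vector up to global scaling. Since $\operatorname{Spec} \Ad_K$ is totally disconnected with the finite places contributing the "restricted" structure, one should note that $\operatorname{Pic}(\Ad_K)$ is trivial (each $\Ocal_\gothp$ and $K_\gothp$ is local, and the restricted product forces triviality almost everywhere), so projective points over $\Ad_K$ genuinely are given by primitive coordinate vectors up to $\Ad_K^*$-scaling. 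Granting this, the argument above goes through cleanly, and the isomorphism is even one of topological groups since the topology on $E(\Ad_K)$ is the one it inherits as a closed subspace of projective space over $\Ad_K$, which by the chart description matches the product topology on $\prod_\gothp E(K_\gothp)$.
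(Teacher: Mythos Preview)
Your proposal is correct and follows essentially the same idea as the paper: projective coordinates of a $K_\gothp$-point can be scaled into $\Ocal_\gothp$, so any family $(P_\gothp)_\gothp$ assembles to an $\Ad_K$-point. The paper's proof is more terse and treats all finite primes uniformly---it simply notes that \emph{any} $K_\gothp$-valued projective point can be written with $\Ocal_\gothp$-coordinates by clearing denominators, without invoking good reduction or the valuative criterion---so your distinction between primes in $S$ and primes outside $S$ is harmless but unnecessary. Your additional remarks on the functor-of-points interpretation and the triviality of $\operatorname{Pic}(\Ad_K)$ address genuine foundational points that the paper silently takes for granted.
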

\begin{proof}
The ring $\Ad_K$ consists of elements $(x_\gothp)_\gothp$
that are almost everywhere integral, i.e., 
for which we have $x_\gothp\in\Ocal_\gothp$ for almost all finite primes
$\gothp$  of $K$, with $\Ocal_\gothp\subset K_\gothp$ the local
ring of integers at $\gothp$.
For $E$ given by a projective model $E_{a,b}$ as in \eqref{eq:E_a,b},
every $K_\gothp$-valued point of $E$ with $\gothp$ finite
can be written with coordinates in~$\Ocal_\gothp$.
It follows that every element in $\prod_{\gothp\le\infty} E(K_\gothp)$
is actually in $E(\Ad_K)$.
\end{proof}

\noindent
As the structure of $E(K_\gothp)$ is different for archimedean
and non-archimedean $\gothp$, we treat these cases separately.
For archimedean primes, $K_\gothp$ is either $\RR$ or~$\CC$.
At complex places, $E(K_\gothp)$ is isomorphic to $(\RR/\Zee)^2$, 
as we have $E(\CC)\iso\CC/\Lambda$ for some lattice $\Lambda\subset \CC$.
At real places, the two possibilities for $E(K_\gothp)$ are
\[
E(K_\gothp) \iso
\begin{cases}
\mathbf{R}/\Zee,                    &\mathrm{if }\ \Delta_E <_\gothp 0;  \\
\mathbf{R}/\Zee \times \Zee/2\Zee,  &\mathrm{if }\ \Delta_E >_\gothp 0,  \\
\end{cases}
\]
depending on the sign of the discriminant $\Delta_E\in K^*$
under $\gothp:  K\to\RR$.

\begin{proposition}
\label{prop:archpart}
Let $K$ be a number field of degree $n$,
and $E/K$ an elliptic curve with discriminant $\Delta_E\in K^*/(K^*)^{12}$.
Then we have an isomorphism of topological groups
$$
\prod_{\gothp|\infty} E(K_\gothp) \iso (\RR/\Zee)^n \times (\Zee/2\Zee)^r.
$$
Here $r\le n$ is the number of real primes $\gothp$ of $K$ for which
we have $\Delta_E >_\gothp 0$.
\end{proposition}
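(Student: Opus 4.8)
The plan is to build the finite product $\prod_{\gothp|\infty}E(K_\gothp)$ out of the local descriptions already established. Each archimedean completion $K_\gothp$ is either $\RR$ or $\CC$; write $r_1$ and $r_2$ for the numbers of real and complex primes of $K$, so that $r_1+2r_2=n$. For a complex prime we have the topological isomorphism $E(K_\gothp)\iso(\RR/\Zee)^2$ coming from the uniformization $E(\CC)\iso\CC/\Lambda$, and for a real prime \eqref{eq:Ereal} gives $E(K_\gothp)\iso\RR/\Zee$ when $\Delta_E<_\gothp0$ and $E(K_\gothp)\iso\RR/\Zee\times\Zee/2\Zee$ when $\Delta_E>_\gothp0$.

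First I would take the product of these isomorphisms over all $\gothp|\infty$; since the index set is finite this is again an isomorphism of topological groups, in fact a homeomorphism of compact groups. Collecting the circle factors, the real primes contribute $r_1$ copies of $\RR/\Zee$ and the complex primes contribute $2r_2$ copies, for a total of $(\RR/\Zee)^{r_1+2r_2}=(\RR/\Zee)^n$; the remaining factors are exactly one $\Zee/2\Zee$ for each real prime $\gothp$ with $\Delta_E>_\gothp0$. Calling the number of such primes $r$, we have $r\le r_1\le n$, and reassembling yields $\prod_{\gothp|\infty}E(K_\gothp)\iso(\RR/\Zee)^n\times(\Zee/2\Zee)^r$, as claimed.

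The one point that deserves a remark — and the closest thing to an obstacle here — is that the integer $r$ is an invariant of $E/K$ and not merely of the chosen Weierstrass model: replacing $E_{a,b}$ by another integral model multiplies $\Delta_E$ by a twelfth power in $K^*$, and for any real embedding $\gothp\col K\to\RR$ a twelfth power maps to a positive real number, so the sign condition $\Delta_E>_\gothp0$ depends only on the class of $\Delta_E$ in $K^*/(K^*)^{12}$. Granted this, the proposition is pure bookkeeping over the $n$ archimedean places of $K$, with whatever genuine difficulty there is already absorbed into the local statement \eqref{eq:Ereal} and the complex uniformization that precede it.
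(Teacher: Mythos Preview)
Your proof is correct and follows the same approach as the paper: both use $r_1+2r_2=n$ to count circle factors from the local descriptions at real and complex places, and both note $r\le r_1\le n$. Your additional remark on the well-definedness of the sign of $\Delta_E$ under change of model is already made in the paper just before \eqref{eq:Ereal}, so the proofs are essentially identical.
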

\begin{proof}
Let $K$ have $r_1$ real and $r_2$ complex primes, then
$\prod_{\gothp|\infty} E(K_\gothp)$ is the product of $r_1+2r_2=n$
circle groups $\RR/\Zee$ and $r$ copies of $\Zee/2\Zee$,
with $r\le r_1\le n$.
\end{proof}

\noindent
To obtain the non-archimedean part $\prod_{\gothp<\infty} E(K_\gothp)$
of $E(\Ad_K)$, 
we take $\gothp$ finite and $E=E_{a,b}$ as in \eqref{eq:E_a,b}, and
consider the continuous reduction map
$
\phi_\gothp: E(K_\gothp) \to \bar{E}(k_\gothp)
$
to the finite set of points of the reduced curve $\bar{E}$
over $k_\gothp=\Ocal_K/\gothp$.
The set of points in the non-singular locus
$\bar{E}^{\mathrm{ns}}(k_\gothp)$ is contained in the image of $\phi_\gothp$ 
and inherits a natural group structure from $E(K_\gothp)$.
For $E_0(K_\gothp)=\phi^{-1}[\bar{E}^{\mathrm{ns}}(k_\gothp)]$
we obtain an exact sequence of topological groups
\begin{equation}
1\to E_1(K_\gothp)\tto E_0(K_\gothp) \tto
        \bar{E}^{\mathrm{ns}}(k_\gothp) \to 1,
\end{equation}
so the kernel of reduction $E_1(K_\gothp)$ is a subgroup of finite index
in $E_0(K_\gothp)$.
For primes of good reduction, the sequence simply reads
\begin{equation}
\label{seq:goodreducemodp}
1\to E_1(K_\gothp)\tto E(K_\gothp) \tto
        \bar{E}(k_\gothp) \to 1,
\end{equation}
and for $\gothp$ of bad reduction, 
$E_0(K_\gothp)\subsetneq E(K_\gothp)$ is of a subgroup of
\emph{finite} index \cite{SJH}*{VII.6.2}.
Either way, $E_1(K_\gothp)\subset E(K_\gothp)$ is a subgroup of finite index.

We can describe $E_1(K_\gothp)$ 
using the formal group of $E$ as in \cite{SJH}*{Chapter IV}.
More precisely, the elliptic logarithm
$
\log_\gothp: E_1(K_\gothp) \to \Ocal_\gothp
$
has a finite kernel of $p$-power order, and its image, which is
an open additive subgroup of the valuation ring $\Ocal_\gothp\subset K_\gothp$,
is non-canonically isomorphic to $\Zee_p^{[K_\gothp:\Que_p]}$.
As $E(K_\gothp)$ contains a
finitely generated $\Zee_p$-module
of free rank $[K_\gothp:\Que_p]$ as a subgroup of finite index,
its torsion subgroup $T_\gothp \subset E(K_\gothp)$ is finite,
and $E(K_\gothp)/T_\gothp$ a free $\Zee_p$-module of rank $[K_\gothp:\Que_p]$.
If we non-canonically write 
\begin{equation}
\label{eq:EK_p}
E(K_\gothp) \iso \Zee_p^{[K_\gothp:\Que_p]} \times T_\gothp
\end{equation}
and take the product over all non-archimedean primes $\gothp$ of $K$,
we can use the fact that the sum of the local degrees at the primes
over $p$ in $K$ equals $n=[K:\Que]$ to obtain the following non-archimedean 
analogue of Proposition \ref{prop:archpart}.
\begin{proposition}
\label{prop:nonarchpart}
Let $E$ be an elliptic curve over a number field $K$ of degree $n$.
Then we have an isomorphism of topological groups
$$
\prod_{\gothp<\infty} E(K_\gothp) =
\Zhat^n \times \prod_{\gothp<\infty} T_\gothp,
$$
where $T_\gothp = E(K_\gothp)^\tor$ is the finite torsion subgroup
of $E(K_\gothp)$.
\qed
\end{proposition}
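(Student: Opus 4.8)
The plan is to deduce the statement directly from Lemma~\ref{lemma:EK_p} by passing to a product over all finite primes and keeping track of the local degrees. No new idea beyond Lemma~\ref{lemma:EK_p} is needed; the work is purely in the organization of the infinite product.

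First I would recall that for a finite prime $\gothp$ the point group $E(K_\gothp)$ is compact, being a closed subset of the compact space $\PP^2(K_\gothp)$, so that $\prod_{\gothp<\infty}E(K_\gothp)$ is a compact topological group for the product topology. By Lemma~\ref{lemma:EK_p}, for each finite prime $\gothp$ lying over a rational prime $p$ there is a topological isomorphism $E(K_\gothp)\iso\Zee_p^{[K_\gothp:\Que_p]}\times T_\gothp$ with $T_\gothp$ finite. Taking the product of these isomorphisms over all $\gothp<\infty$ identifies $\prod_{\gothp<\infty}E(K_\gothp)$ topologically with $\bigl(\prod_{\gothp<\infty}\Zee_p^{[K_\gothp:\Que_p]}\bigr)\times\bigl(\prod_{\gothp<\infty}T_\gothp\bigr)$, where in the first factor $p$ denotes the rational prime below $\gothp$.

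Next I would regroup the first factor by residue characteristic, writing the set of finite primes of $K$ as the disjoint union $\bigsqcup_p\{\gothp : \gothp\mid p\}$. For a fixed $p$ the finitely many primes above it contribute $\prod_{\gothp\mid p}\Zee_p^{[K_\gothp:\Que_p]}=\Zee_p^{\,d}$ with $d=\sum_{\gothp\mid p}[K_\gothp:\Que_p]=[K:\Que]=n$, the identity for the local degrees over $p$ already recalled in the text. Since $\Zhat=\prod_p\Zee_p$, this gives $\prod_{\gothp<\infty}\Zee_p^{[K_\gothp:\Que_p]}=\prod_p\Zee_p^{\,n}=\Zhat^n$, hence
$$
\prod_{\gothp<\infty}E(K_\gothp)\iso\Bigl(\prod_p\Zee_p^{\,n}\Bigr)\times\prod_{\gothp<\infty}T_\gothp=\Zhat^n\times\prod_{\gothp<\infty}T_\gothp
$$
as topological groups, which is the assertion.

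The only point that needs a moment's care is that the splitting provided by Lemma~\ref{lemma:EK_p} is a genuine homeomorphism and not merely an abstract group isomorphism, so that the product of these splittings is again a topological isomorphism; but this is exactly the form in which that lemma was established, via the finitely generated $\Zee_p$-module structure of $E_1(K_\gothp)$ and of its finite-index overgroup $E(K_\gothp)$. Consequently I expect no real obstacle here: all the substance resides in Lemma~\ref{lemma:EK_p} and the fundamental identity $\sum_{\gothp\mid p}[K_\gothp:\Que_p]=n$, and what remains is elementary bookkeeping, which is why the statement can be marked \qed once that lemma is in place.
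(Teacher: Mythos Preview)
Your proof is correct and follows exactly the approach indicated in the paper: take the product of the local decompositions from Lemma~\ref{lemma:EK_p} over all finite primes and use the identity $\sum_{\gothp\mid p}[K_\gothp:\Que_p]=n$ to regroup the $\Zee_p$-factors into $\Zhat^n$. The paper in fact gives no separate argument beyond that one-line remark preceding the proposition, so your write-up simply spells out the bookkeeping that the paper leaves implicit.
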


\noindent
In order to combine Propositions \ref{prop:archpart} and 
\ref{prop:nonarchpart}, we define the profinite group
\begin{equation}
\label{eq:T_{E/K}}
T_{E/K}=\prod_{\gothp\le\infty}T_\gothp
\end{equation}
as a product of finite groups $T_\gothp$ given by
\begin{equation}
\label{eq:T_p}
T_\gothp =
\begin{cases}
E(K_\gothp)^\tor    &\mathrm{if}\ \gothp\ \mathrm{is\ finite;}  \\
\Zee/2\Zee          &\mathrm{if}\ \gothp\ \mathrm{is\ real\ and\ }\Delta_E >_\gothp 0;  \\
1    &\mathrm{otherwise.}  \\
\end{cases}
\end{equation}
With this definition, the adelic point group 
of $E$ over the number field $K$ of degree~$n$
is a topological group that can be written, as promised in 
\eqref{eq:adelicdecomp}, as
\[
E(\Ad_K)\iso (\RR/\Zee)^n\times \Zhat^n \times T_{E/K}.
\]
In this decomposition,
$(\RR/\Zee)^n$ is the connected component $E_{\cc}(\Ad_K)$ of 
the zero element in $E(\Ad_K)$,
and in the totally disconnected profinite group
$$
E(\Ad_K)/E_{\cc}(\Ad_K) \iso \Zhat^n \times T_{E/K},
$$
$T_{E/K}$ is the \emph{closure} of the torsion subgroup. 
Thus, the isomorphism type of $E(\Ad_K)$ 
is determined by the degree~$n$ of~$K$
and the structure of the \emph{torsion closure} $T_{E/K}$.

\section{Structure of the torsion closure}
\label{S:torsion}

\noindent
Let $T$ be any group that is obtained as a countable product
of finite abelian or, equivalently, finite cyclic groups.
Then there are usually many ways to represent $T$ as a product.
The group $\prod_{m=1}^\infty \Zee/m\Zee$ occurring in
Theorem \ref{thm:almostall}
is for instance isomorphic to $\prod_{m=2020}^\infty \Zee/m\Zee$, 
to $(\prod_{m=1}^\infty \Zee/m\Zee)^2$, and even to
$\prod_p \FF_p^*$.
Our choice is arbitrary, but requires only few characters to write it down.

In order to understand this notational ambiguity, and to deal with it,
we can represent a countable product $T$ of finite cyclic groups
in a more canonical way.
Writing each of the cyclic constituents as a product of
cyclic groups of prime power order and taking the cyclic groups
of each prime power order together, we arrive at its 
\emph{standard representation}
\[
T = \prod_{\ell \textrm{ prime}}\
\prod_{k=1}^\infty (\Zee/\ell^k\Zee)^{e(\ell^k)}.
\]
The exponents $e(\ell^k)$ are invariants of $T$, as they
can be defined intrinsically in terms of $T$ as
\[
e(\ell^k)=\dim_{{\mathbf F}_\ell}
T[\ell^k]/\left(T[\ell^{k-1}]+ \ell T[\ell^{k+1}]\right),
\]
We call $e(\ell^k)$ the \emph{$\ell^k$-rank} of $T$.
Clearly, two countable products of finite cyclic groups are
isomorphic if and only if their $\ell^k$-ranks coincide for
\emph{all} prime powers $\ell^k>1$.

The $\ell^k$-rank $e(\ell^k)$ of $T$ is either finite, in $\Zee_{\ge0}$,
or countably infinite.
In the latter case we write $e(\ell^k)=\omega$, and note that
we may identify
$$
(\Zee/\ell^k\Zee)^\omega=\hbox{Map}(\Zee_{>0}, \Zee/\ell^k\Zee)
$$
with the group of $\Zee/\ell^k\Zee$-valued functions
on the set~$\Zee_{>0}$ of positive integers.

The group 
$T_\mx=\prod_{m=1}^\infty \Zee/m\Zee$ occurring in Theorem \ref{thm:almostall}
has $e(\ell^k)=\omega$ for all prime powers $\ell^k$, as there are infinitely
many integers $m$ that are exactly divisible by $\ell^k$.
Leaving out finitely many $m$ from the product, or having each $m$ occur
twice, does not change the isomorphism type.
As there are infinitely many primes $p$ for which $p-1$ is exactly 
divisible by $\ell^k$, by the classical theorem of Dirichlet
on primes in arithmetic progressions, we also have 
$T_\mx\iso \prod_p \FF_p^*$, as claimed above.

In order to finish the proof of Theorem \ref{thm:almostall},
we need to show that for given $K$,
almost all $E/K$ have the property that
the torsion closure $T_{E/K}$ in \eqref{eq:T_{E/K}}
has infinite $\ell^k$-rank for \emph{all} prime powers~$\ell^k>1$,
making $T_{E/K}$ isomorphic to $T_\mx$.

In order to determine $e(\ell^k)$ for $T_{E/K}$,
we need to count how many
cyclic direct summands of order $\ell^k$ occur in 
$T_\gothp=E(K_\gothp)^\tor$ at finite primes $\gothp$ of $K$.
This can be done by studying the splitting behavior of $\gothp$
in the \emph{$\ell^k$-division fields}
\begin{equation}
Z_{E/K}(\ell^k) \stackrel{\textrm{def}}{=} K(E[\ell^k](\bar{K}))
\end{equation}
of $E$ over $K$.
The $\ell^k$-torsion subgroup
$E(K_\gothp)[\ell^k]\subset T_\gothp$
is full, i.e., isomorphic to $(\Zee/\ell^k\Zee)^2$,
if and only if $\gothp$ splits completely in 
$K\subset Z_{E/K}(\ell^k)$.

\begin{lemma}
\label{lemma:divfieldcriterion}
Let $E/K$ be an elliptic curve, and
$\ell^k>1$ a prime power for which the inclusion 
$$
Z_{E/K}(\ell^k)\subset Z_{E/K}(\ell^{k+1})
$$
of division fields is strict.
Then $T_{E/K}$ in \eqref{eq:T_{E/K}} has infinite $\ell^k$-rank.
\end{lemma}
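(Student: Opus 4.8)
The plan is to exploit the numerical criterion \eqref{eq:eellk} for the $\ell^k$-rank together with the local interpretation of splitting in division fields. The key observation is that a finite prime $\gothp$ of $K$ contributes a cyclic summand of order $\ell^k$ to $T_{E/K}$ precisely when $E(K_\gothp)$ has a point of exact order $\ell^k$ but its full $\ell^k$-torsion is \emph{not} rational over $K_\gothp$; and that the rationality of the full $\ell^j$-torsion over $K_\gothp$ is governed by complete splitting of $\gothp$ in $Z_{E/K}(\ell^j)$. So the strict inclusion $Z_{E/K}(\ell^k)\subsetneq Z_{E/K}(\ell^{k+1})$ should be converted, via Chebotarev, into the existence of infinitely many primes $\gothp$ that split completely in $Z_{E/K}(\ell^k)$ but \emph{not} in $Z_{E/K}(\ell^{k+1})$, and each such $\gothp$ should be shown to contribute to the $\ell^k$-rank.

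First I would set up the local structure: for $\gothp$ of good reduction over $p\neq\ell$, Lemma \ref{lemma:EK_p} gives $E(K_\gothp)^\tor$ as a finite abelian group whose $\ell$-part $T_\gothp[\ell^\infty]$ injects into $(\Zee/\ell^j\Zee)^2$ for the largest $j$ with $\mu_{\ell^j}$ (equivalently, the full $\ell^j$-torsion) rational. More precisely, since $E(K_\gothp)[\ell^j]$ is full iff $\gothp$ splits completely in $Z_{E/K}(\ell^j)$, I would argue that for $\gothp$ splitting completely in $Z_{E/K}(\ell^k)$ but not in $Z_{E/K}(\ell^{k+1})$, the group $E(K_\gothp)[\ell^\infty]$ contains $(\Zee/\ell^k\Zee)^2$ but is not all of $(\Zee/\ell^{k+1}\Zee)^2$; a short argument with the Galois action on $E[\ell^{k+1}]$ (the image in $\GL_2(\Zee/\ell^{k+1}\Zee)$ of the decomposition group at $\gothp$ is nontrivial but trivial modulo $\ell^k$) then shows that the decomposition-group-fixed submodule of $E[\ell^{k+1}]$, which is exactly $E(K_\gothp)[\ell^{k+1}]$, has at least one cyclic factor of order exactly $\ell^k$. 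Feeding this into \eqref{eq:eellk} applied to $T_\gothp$ in isolation shows each such $\gothp$ pushes up the $\ell^k$-rank of the product $T_{E/K}=\prod_\gothp T_\gothp$ by at least one.

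Next I would produce infinitely many such primes. By Chebotarev's density theorem applied to the extension $K\subset Z_{E/K}(\ell^{k+1})$, the conjugacy class of elements of $\Gal(Z_{E/K}(\ell^{k+1})/K)$ that restrict to the identity on $Z_{E/K}(\ell^k)$ but are nontrivial on $Z_{E/K}(\ell^{k+1})$ is nonempty — this is exactly where the strictness hypothesis $Z_{E/K}(\ell^k)\subsetneq Z_{E/K}(\ell^{k+1})$ is used — hence has positive density, so infinitely many unramified primes $\gothp$ of $K$ (all but finitely many of which have good reduction) have Frobenius in that class. Each contributes a summand $\Zee/\ell^k\Zee$, and since distinct primes contribute independently, the $\ell^k$-rank $e(\ell^k)$ of $T_{E/K}$, computed via \eqref{eq:eellk}, is at least the (infinite) number of such primes, hence equals $\omega$.

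The main obstacle I anticipate is the local representation-theoretic step: showing cleanly that a prime splitting completely in $Z_{E/K}(\ell^k)$ but not in $Z_{E/K}(\ell^{k+1})$ genuinely forces a cyclic $\ell^k$-summand in $E(K_\gothp)^\tor$, rather than merely ruling out the full $(\Zee/\ell^{k+1}\Zee)^2$. One must analyze the possible decomposition-group images in the kernel of $\GL_2(\Zee/\ell^{k+1}\Zee)\to\GL_2(\Zee/\ell^k\Zee)$, check that the fixed module of a nontrivial such image always retains a cyclic factor of order exactly $\ell^k$ (using that the fixed module modulo $\ell^k$ is everything, by complete splitting in $Z_{E/K}(\ell^k)$), and handle the harmless finitely many bad primes and the prime $\gothp\mid\ell$ separately. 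Everything else is a routine assembly of Chebotarev plus the rank formula \eqref{eq:eellk}.
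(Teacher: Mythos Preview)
Your approach is essentially the same as the paper's: pick primes $\gothp$ that split completely in $Z_{E/K}(\ell^k)$ but not in $Z_{E/K}(\ell^{k+1})$, invoke positive density to get infinitely many, and show each such $\gothp$ contributes a cyclic summand of order $\ell^k$ to $T_\gothp$. The only difference is that the ``main obstacle'' you anticipate in the local step dissolves without any Galois-module analysis: the $\ell$-primary part of $T_\gothp$ embeds in $E(\bar K)[\ell^\infty]\iso(\Que_\ell/\Zee_\ell)^2$ and is therefore a finite abelian $\ell$-group on at most two generators, say $\Zee/\ell^a\Zee\times\Zee/\ell^b\Zee$ with $a\le b$; full $\ell^k$-torsion forces $a\ge k$, while the failure of full $\ell^{k+1}$-torsion forces $a\le k$, so $a=k$ and the summand is there. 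No case analysis of decomposition-group images in $\ker[\GL_2(\Zee/\ell^{k+1}\Zee)\to\GL_2(\Zee/\ell^k\Zee)]$ is needed, and primes of bad reduction or primes above $\ell$ require no separate treatment either, since the argument uses only the abstract structure of $E(K_\gothp)^\tor$.
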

\begin{proof}
Let $\gothp$ be a finite prime of $K$ that
splits completely in the division field $Z_{E/K}(\ell^k)$, but not in the 
division field $Z_{E/K}(\ell^{k+1})$.
Then $E(K_\gothp)$ has full $\ell^k$-torsion,
but not full $\ell^{k+1}$-torsion.
This implies that the finite group $T_\gothp$ contains a subgroup
isomorphic to $(\Zee/\ell^k\Zee)^2$ but not one isomorphic to
$(\Zee/\ell^{k+1}\Zee)^2$, and therefore has
at least one cyclic direct summand of order $\ell^k$.

By our assumption, the set of primes $\gothp$ that split completely in 
$Z_{E/K}(\ell^k)$, but not in $Z_{E/K}(\ell^{k+1})$, is infinite and
of positive density
$$
[Z_{E/K}(\ell^k):K]^{-1}-[Z_{E/K}(\ell^{k+1}):K]^{-1} >0.
$$
For all $\gothp$ in the infinite set 
thus obtained, the group $T_\gothp$
has a cyclic direct summand of order $\ell^k$.
It follows that $T_{E/K}=\prod_{\gothp\le\infty} T_\gothp$ has infinite
$\ell^k$-rank.
\end{proof}

\noindent
We conclude from Lemma \ref{lemma:divfieldcriterion}
that for an elliptic curve $E$ having the property
that for all primes $\ell$, the tower of $\ell$-power division fields 
\begin{equation}
\label{eqn:elltower}
Z_{E/K}(\ell) \subset Z_{E/K}(\ell^2) \subset Z_{E/K}(\ell^3)
\subset \cdots \subset Z_{E/K}(\ell^k) \subset \cdots
\end{equation}
has strict inclusions
at every level, $T_{E/K}$ is isomorphic to $\prod_{m=1}^\infty\Zee/m\Zee$.
In this situation, $E(\Ad_K)$ is isomorphic to the universal group
\begin{equation}
\label{eq:generic}
\mathcal{E}_n =
(\RR/\Zee)^n \times \Zhat^n \times \prod_{m=1}^\infty\Zee/m\Zee,
\end{equation}
in degree $n$,
and $E/K$ is \emph{generic} in the sense of Theorem \ref{thm:almostall}.

\section{Universality of $\mathcal E_n$}
\label{S:universality}

\noindent
In order to finish the proof of Theorem \ref{thm:almostall},
it suffices to show that for almost all $E$
defined over a fixed number field~$K$,
the extension $Z_{E/K}(\ell^k) \subset Z_{E/K}(\ell^{k+1})$
in the tower
\eqref{eqn:elltower} of $\ell$-power division fields is strict
for all prime powers $\ell^k>1$.
In order to see this for given $E/K$, we look at the Galois representation
\begin{equation}
\rho_{E/K}: G_K=\Gal(\bar K/K) \tto \Gotha_K=\Aut(E(\bar K)^\tor )
\end{equation}
on the group $E(\bar K)^\tor$ of its $\bar K$-valued torsion points.
The group $\Gotha_K$ is isomorphic to
$
\lim\limits_{\leftarrow m} \GL_2(\Zee/m\Zee)=\GL_2(\Zhat),
$
since $E(\bar K)^\tor$ is obtained as an injective limit
$$
E(\bar K)^\tor = 
\lim\limits_{\to m} E(\bar K) [m]\iso 
   \lim\limits_{\to m} (\frac{1}{m}\Zee/\Zee)^2 = (\Que/\Zee)^2.
$$
%
The restriction of the action of $G_K$ to the $m$-torsion subgroup
$E(\bar K) [m]$ of $E(\bar K)^\tor$ is described by
the reduction
$$
\rho_{E/K,m}: G_K\tto \Aut(E(\bar K)[m])\iso \GL_2(\Zee/m\Zee)
$$
of $\rho_{E/K}$ modulo $m$,
and the invariant field of $\ker\rho_{E/K,m}$ is 
the $m$-division field $Z_{E/K}(m)=K(E(\bar K)[m])$ of $E$ over $K$.
In particular, we have an equivalence
\begin{equation}
Z_{E/K}(\ell^k) = Z_{E/K}(\ell^{k+1}) \equi
     \ker[\rho_{E/K,\ell^k}] = \ker[\rho_{E/K,\ell^{k+1}}].
\end{equation}
In case $\rho_{E/K}$ is \emph{surjective}, all extensions 
$Z_{E/K}(\ell^k) \subset Z_{E/K}(\ell^{k+1})$ have degree 
\begin{equation}
\label{eqn:candegree}
\ell^4 =\#\ker [ \GL_2(\Zee/\ell^{k+1}\Zee) \tto \GL_2(\Zee/\ell^k\Zee) ],
\end{equation}
and in this case $E(\Ad_K)$ is isomorphic to the universal group
${\mathcal E}_n$ in \eqref{eq:generic}.

It is certainly not true that the \emph{image of Galois} $\rho_{E/K}[G_K]$
is always equal to the full automorphism group
$\Gotha_K=\Aut(E(\bar K)^\tor)$.
There is however the basic result, due to Serre \cite{Serre}, that 
$\rho_{E/K}[G_K]$ is an \emph{open} subgroup
of $\Gotha_K$ if $E$ is without CM, i.e., if $E$ does \emph{not} have
complex multiplication over $\bar K$.
In particular, the index of $\rho_{E/K}[G_K]$ in
$\Gotha_K\iso\GL_2(\Zhat)$ is always finite for $E$ without CM.
As observed in the introduction,
elliptic curves defined over $K$ with CM over $\bar K$
have their $j$-invariants in some \emph{finite} subset of $K$.
This is because for any number field $K$,
there are only finitely many imaginary quadratic orders $\Ocal$
for which the $j$-invariant $j(\Ocal)$ lies $K$.
As a consequence, almost all elliptic curves over $K$ are without CM.

We first look at the case $K=\Que$, which is somewhat particular as there
is a special complication that prevents $\rho_{E/K}$ in all cases
from being surjective.
In order to describe it, we let
$$
\chi_2: \Gotha_\Que=\Aut(E(\Qbar)^\tor)\tto \Aut (E[2](\Qbar))\iso
      \GL_2(\Zee/2\Zee)\iso S_3 \to \{\pm1\}
$$
be the non-trivial quadratic character
that maps an automorphism of $E(\Qbar)^\tor$ to the sign of the
permutation by which it acts on the
three non-trivial 2-torsion points of $E$.
For $\sigma\in G_\Que$, the sign of this permutation for 
$\rho_{E/\Que}(\sigma)$ is reflected in the action of $\sigma$ on the subfield
$\Que(\sqrt{\Delta})\subset Z_{E/K}(2)$ that is
generated by the square root of the
discriminant $\Delta=\Delta_E$ of the elliptic curve $E$,
and given by
$$
\chi_2(\rho_{E/\Que}(\sigma))=\sigma(\sqrt\Delta)/\sqrt\Delta.
$$
The Dirichlet character
$\Zhat^*\to \{\pm1\}$ corresponding to $\Que(\sqrt{\Delta})$
can be seen as a character
$$
\chi_\Delta: \Gotha_\Que\iso \GL_2(\Zhat)\mapright{\det}\Zhat^*\to \{\pm1\}
$$
on $\Gotha_\Que$.
It is different from the character $\chi_2$, which does not factor via
the determinant map $\Gotha_\Que\mapright{\det}\Zhat^*$ on $\Gotha_\Que$. 

The \emph{Serre character $\chi_E: \Gotha_\Que\to \{\pm1\}$} associated to $E$
is the non-trivial quadratic character
obtained as the product $\chi_E=\chi_2\chi_\Delta$.
By construction, it vanishes on the image of Galois
$\rho_{E/\Que}(G_\Que)\subset \Gotha_\Que$, so the
image of Galois is never the full group $\Gotha_\Que$.
In the case where we have $\rho_{E/\Que}(G_\Que)=\ker \chi_E $,
we say that $E$ is a \emph{Serre curve}.

If $E$ is a Serre curve, then the image of Galois is of index 2 in the full
group $\Gotha_\Que\iso \GL_2(\Zhat)$, and for every prime power $\ell^k>1$,
the extension
$$
Z_{E/K}(\ell^k)\subset Z_{E/K}(\ell^{k+1})
$$
of division fields for $E$ in Lemma \ref{lemma:divfieldcriterion}
has the degree~$\ell^4$ from \eqref{eqn:candegree}
for odd~$\ell$, and at least degree $\ell^3$ for $\ell=2$.
In particular, the hypothesis of Lemma \ref{lemma:divfieldcriterion} on $E$
is satisfied for all prime powers $\ell^k$ in case $E$ is a Serre curve.
Nathan Jones \cite{Jones} proved in 2010 that, in the sense of our Theorem
\ref{thm:almostall}, almost all elliptic curves are Serre curves.
This implies the case $K=\Que$ of Theorem \ref{thm:almostall} in~\cite{Ang}.
\begin{theorem}
\label{thm:almostallQ}
For almost all elliptic curves $E/\Que$, the adelic point group
$E(\mathbf{A}_\Que)$ is isomorphic to 
$
      \RR/\Zee\times\Zhat\times \prod_{m=1}^\infty \Zee/m\Zee.
$
\qed
\end{theorem}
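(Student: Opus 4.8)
The plan is to reduce the claim to Nathan Jones's theorem that almost all elliptic curves over $\Que$ are Serre curves, using the structural results already in hand. First I would specialize the decomposition \eqref{eq:adelicdecomp} to $K=\Que$, where $n=1$: this gives a topological isomorphism $E(\Ad_\Que)\iso(\RR/\Zee)\times\Zhat\times T_{E/\Que}$ with $T_{E/\Que}=\prod_{\gothp\le\infty}T_\gothp$ the torsion closure. Since $\mathcal{E}_1=(\RR/\Zee)\times\Zhat\times\prod_{m=1}^\infty\Zee/m\Zee$, the theorem is equivalent to the statement that $T_{E/\Que}\iso\prod_{m=1}^\infty\Zee/m\Zee$ for almost all $E/\Que$. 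By the criterion recorded after Lemma \ref{lemma:divfieldcriterion}, this last isomorphism holds as soon as, for every prime $\ell$, the tower \eqref{eqn:elltower} of $\ell$-power division fields of $E/\Que$ has a strict inclusion $Z_{E/\Que}(\ell^k)\subset Z_{E/\Que}(\ell^{k+1})$ at every level $k\ge1$. So it suffices to show that this ``strictness at every level, for every $\ell$'' property holds for almost all $E/\Que$.

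Second, I would observe that the property holds for every Serre curve. If $E/\Que$ is a Serre curve then $\rho_E(G_\Que)=\ker[\chi_E]$ has index $2$ in $\Gotha_\Que\iso\GL_2(\Zhat)$, and the obstruction $\chi_E=\chi_2\chi_\Delta$ involves only $2$-power division data; comparing $[Z_{E/\Que}(\ell^{k+1}):Z_{E/\Que}(\ell^k)]$ with the full degree $\ell^4=\#\ker[\GL_2(\Zee/\ell^{k+1}\Zee)\to\GL_2(\Zee/\ell^k\Zee)]$ from \eqref{eqn:candegree}, one sees that this degree is exactly $\ell^4$ for odd $\ell$ and at least $\ell^3$ for $\ell=2$, hence in all cases strictly larger than $1$. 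Thus the hypothesis of Lemma \ref{lemma:divfieldcriterion} is met for every prime power $\ell^k>1$ when $E$ is a Serre curve, so $T_{E/\Que}\iso\prod_{m=1}^\infty\Zee/m\Zee$. Finally I would invoke Jones's result \cite{Jones}, which says that almost all elliptic curves over $\Que$ (in the counting sense of Theorem \ref{thm:almostall}) are Serre curves; chaining this with the reductions above yields $E(\Ad_\Que)\iso\mathcal{E}_1$ for almost all $E/\Que$.

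The main obstacle is really just the density statement of Jones, which I would treat as a black box; everything else is bookkeeping built on \eqref{eq:adelicdecomp} and Lemma \ref{lemma:divfieldcriterion}. The only point that deserves genuine care is the prime $\ell=2$, where the Serre character actually shrinks the image of Galois at the low $2$-power levels: one must check that even there each inclusion in the tower stays strict (degree $\ge 8$), so that no cyclic summand of $2$-power order is lost and $T_{E/\Que}$ still has infinite $\ell^k$-rank for every prime power $\ell^k>1$.
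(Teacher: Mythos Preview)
Your proposal is correct and follows essentially the same route as the paper: specialize the decomposition \eqref{eq:adelicdecomp} to $n=1$, reduce via Lemma \ref{lemma:divfieldcriterion} to strictness of every step in the $\ell$-power division tower, verify this for Serre curves (degree $\ell^4$ for odd $\ell$, at least $\ell^3=8$ for $\ell=2$), and then invoke Jones's theorem \cite{Jones}. The emphasis you place on the care needed at $\ell=2$ matches exactly the paper's own remark.
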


\noindent
In order to deal with the case $K\ne\Que$, we need an
analogue of Jones' result stating that for almost all $E$ over $K$, the
image of Galois $\rho_{E/K}[G_K]\subset \Gotha_K$ is large.
As quadratic extensions of a number field $K\ne\Que$ are mostly
non-cyclotomic, there is no Serre character here.
However, for number fields $K$ that are not linearly disjoint from the 
maximal cyclotomic extension $\Que^\cyc=\Que(\zeta_\infty)$ of $\Que$,
the natural embedding
$$
\Gal(K^\cyc/K)\tto \Gal(\Que^\cyc/\Que)=\Zhat^*
$$
will not be an isomorphism, and identify $\Gal(K^\cyc/K)$ with some
open subgroup $H_K\subset \Zhat^*$ of index equal to the field degree
of the extension $\Que\subset (K\cap \Que^\cyc)$.
In this case, the image of Galois $\rho_{E/K}[G_K]\subset \Gotha_K$
is contained in the inverse image $\det^{-1}[H_K]$ of $H_K$
under the determinant map $\det: \Gotha_K\to\Zhat^*$.
We say that the image of Galois for an elliptic curve $E$
over $K\ne\Que$ is \emph{maximal} in case we have
\begin{equation}
\label{eqn:maximal}
\rho_{E/K}[G_K] = \det\nolimits^{-1}[H_K].
\end{equation}
For $K\ne\Que$, Zywina \cite{Zywina} proved in 2010 that,
in the sense of our Theorem \ref{thm:almostall},
almost all elliptic curves $E/K$ have maximal Galois image.
This allows us to finish the proof of our main theorem.

\medskip\noindent
\emph{Proof of Theorem \ref{thm:almostall}.}
The case $K=\Que$ is Theorem \ref{thm:almostallQ}.
For $K\ne\Que$, Zywina's result implies in particular that 
$$
\rho_{E/K}[G_K]=\det\nolimits^{-1}[H_K]\subset \Gotha_K
$$
contains $\ker[\det]\iso \SL_2(\Zhat)$ for almost all $E$.
It follows that for prime powers $\ell^k>1$, the degree of the 
extension $Z_{E/K}(\ell^k) \subset Z_{E/K}(\ell^{k+1})$ for these $E$
is maybe not
the maximal possible degree $\ell^4$ that we have in \eqref{eqn:candegree},
but it is still at least
\begin{equation}
\ell^3 =\#\ker [ \SL_2(\Zee/\ell^{k+1}\Zee) \tto \SL_2(\Zee/\ell^k\Zee) ].
\end{equation}
This implies that $T_{E/K}$ is the `maximal' group
$\prod_{m=1}^\infty \Zee/m\Zee$, and 
$E(\Ad_K)$ the generic group ${\mathcal E}_n$.
\qed

\begin{remark}
\label{rem:cmcase}
Even though, for the purpose of Theorem \ref{thm:almostall},
we can disregard all elliptic curves $E/K$ having CM,
one can show that also these curves typically have 
generic adelic point group, at least if we choose for $K$
the field of definition $\Que(j(E))$ of $E$.
This is because in the CM-case the relevant extension of division fields 
$
Z_{E/K}(\ell^k)\subset Z_{E/K}(\ell^{k+1})
$
from Lemma \ref{lemma:divfieldcriterion}
has generic degree $\ell^2$, and can be described explicitly in terms of
the ray class fields of conductor $\ell^k$ and $\ell^{k+1}$ associated
to the CM-order of $E$.
\end{remark}

\section{Non-generic point groups}
\label{S:non-generic}

\noindent
If the adelic point group $E(\Ad_K)$ is non-generic,
there is a prime power $\ell^k>1$ for which 
the inclusion of division fields
\begin{equation}
\label{eq:divfieldinclusion}
Z_{E/K}(\ell^k) \subset Z_{E/K}(\ell^{k+1})
\end{equation}
is an equality.
In case we can freely choose our ground field $K$,
it is easy to force equality in~\eqref{eq:divfieldinclusion} and
construct elliptic curves $E/K$ for which
the torsion closure $T_{E/K}$ has $\ell^k$-rank equal to 0
for \emph{any} prescribed finite set of prime powers $\ell^k$.
It suffices to take $m$ in the following Lemma divisible by
the appropriate powers $\ell^{k+1}$.

\begin{lemma}
\label{lemma:elk=0}
Let $E/\Que$ be any elliptic curve, $m\in\Zee_{>0}$ an integer,
and 
$$
K=Z_{E/\Que}(m)=\Que(E[m](\Qbar))
$$
the $m$-division field of $E$ over $\Que$.
Then $E$ is an elliptic curve defined over $K$, and
$T_{E/K}$ has $\ell^k$-rank $0$ for every prime power $\ell^k>1$ for which
$\ell^{k+1}$ divides $m$.
\end{lemma}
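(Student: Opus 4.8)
The plan is to exploit the product decomposition $T_{E/K}=\prod_{\gothp\le\infty}T_\gothp$ from \eqref{eq:T_{E/K}} and to show that, once $\ell^{k+1}\mid m$, every local factor $T_\gothp$ contributes $0$ to the intrinsic $\ell^k$-rank formula \eqref{eq:eellk}. To start, note that $E$ is given by a Weierstrass equation with coefficients in $\Que\subseteq K$, so it is indeed an elliptic curve over $K$; and by the very definition of $K=Z_{E/\Que}(m)$ the full group $E[m](\Qbar)\iso(\Zee/m\Zee)^2$ lies in $E(K)$, hence in $E(K_\gothp)$ for every prime $\gothp$ of $K$. Consequently, if $\ell^{k+1}\mid m$ then $(\Zee/\ell^{k+1}\Zee)^2$ embeds into $T_\gothp$ for \emph{every} $\gothp$.

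The next step is to determine the $\ell$-primary structure of the finite local torsion groups at the finite primes. By Lemma \ref{lemma:EK_p} the group $T_\gothp=E(K_\gothp)^\tor$ is a \emph{finite} abelian group, and it embeds into the torsion subgroup of $E$ over an algebraic closure of $K_\gothp$, whose $\ell$-primary part is isomorphic to $(\Que_\ell/\Zee_\ell)^2$. Hence $T_\gothp[\ell^\infty]$ is a finite subgroup of $(\Que_\ell/\Zee_\ell)^2$ and is therefore of the form $\Zee/\ell^a\Zee\times\Zee/\ell^b\Zee$ with $0\le a\le b$. The inclusion $(\Zee/\ell^{k+1}\Zee)^2\hookrightarrow T_\gothp[\ell^\infty]$ established above then forces $a\ge k+1$, and a fortiori $b\ge k+1$. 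The archimedean primes require no work: since $\ell^k>1$ and $\ell^{k+1}\mid m$ we have $m\ge\ell^{k+1}\ge4$, so the Weil pairing places $\zeta_m$ in $K$ and $K$ is totally imaginary; thus $K$ has no real primes and all archimedean factors $T_\gothp$ in \eqref{eq:T_p} are trivial.

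It remains to feed this into \eqref{eq:eellk}. Taking $\ell^j$-torsion commutes with the unrestricted product, and so does forming sums of subgroups, so the $\ell^k$-rank of $T_{E/K}$ equals the sum over all $\gothp$ of $\dim_{\FF_\ell}T_\gothp[\ell^k]/\bigl(T_\gothp[\ell^{k-1}]+\ell T_\gothp[\ell^{k+1}]\bigr)$. For each finite $\gothp$, the description $T_\gothp[\ell^\infty]\iso\Zee/\ell^a\Zee\times\Zee/\ell^b\Zee$ with $a,b\ge k+1$ shows that multiplication by $\ell$ maps $T_\gothp[\ell^{k+1}]$ onto $T_\gothp[\ell^k]$; hence $\ell T_\gothp[\ell^{k+1}]=T_\gothp[\ell^k]$ and the local quotient vanishes. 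The archimedean factors contribute $0$ as well. Therefore $e(\ell^k)=0$, i.e.\ $T_{E/K}$ has $\ell^k$-rank $0$, as claimed.

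I would not expect a serious obstacle here. The one point deserving a moment's care is the prime $\ell=p$ at a finite prime $\gothp$ lying over $p$, where the $\Zee_p$-module structure of $E(K_\gothp)$ is dictated by the formal group of $E$; but this only affects the free part of $E(K_\gothp)$, while Lemma \ref{lemma:EK_p} already tells us that the torsion part $T_\gothp$ is finite, so the single embedding of $T_\gothp[\ell^\infty]$ into $(\Que_\ell/\Zee_\ell)^2$ handles all primes $\ell$ uniformly.
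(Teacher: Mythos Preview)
Your proof is correct and follows the same line as the paper's own argument: both use that the full $\ell^{k+1}$-torsion is $K$-rational to conclude that no finite-prime factor $T_\gothp$ can have a cyclic summand of order exactly $\ell^k$, and both dispose of the archimedean factors by observing that $\zeta_{\ell^{k+1}}\in K$ forces $K$ to be totally imaginary. The paper states the key step (``none of the $T_\gothp$ will have a cyclic direct summand of order $\ell^k$'') without further explanation, whereas you supply the explicit justification via the structure $T_\gothp[\ell^\infty]\cong\Zee/\ell^a\Zee\times\Zee/\ell^b\Zee$ with $a,b\ge k+1$ and the resulting surjectivity of $\ell\colon T_\gothp[\ell^{k+1}]\to T_\gothp[\ell^k]$; this is a welcome elaboration. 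Two minor presentational remarks: your sentence ``$(\Zee/\ell^{k+1}\Zee)^2$ embeds into $T_\gothp$ for \emph{every} $\gothp$'' should read ``for every \emph{finite} $\gothp$'' (at archimedean $\gothp$ the group $T_\gothp$ is trivial by \eqref{eq:T_p}, which you correctly use later); and the phrase ``sum over all $\gothp$'' for the $\ell^k$-rank is not literally accurate for infinite products of $\FF_\ell$-spaces, but since you show each local quotient vanishes the global quotient is the zero product and the conclusion $e(\ell^k)=0$ is unaffected.
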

\begin{proof}
Suppose $\ell^{k+1}>\ell$ divides $m$. 
Then the full $\ell^{k+1}$-torsion
subgroup $E(\bar K)[\ell^{k+1}]$ is contained in $E(K)$,
so none of the torsion subgroups $T_\gothp$ of the
non-archimedean point groups $E(K_\gothp)$ in \eqref{eq:EK_p}
will have a a cyclic direct summand of order $\ell^k$.
As $K$ contains an $\ell^{k+1}$-th root of unity it has no real primes,
so by definition \eqref{eq:T_p} the torsion closure
$T_{E/K}=\prod_{\gothp\le\infty} T_\gothp$
has $\ell^k$-rank $0$.
\end{proof}

\noindent
In view of Lemma \ref{lemma:elk=0}, a more interesting question is 
which non-generic adelic point groups can 
occur over a \emph{given} number field $K$, such as $K=\Que$.
To realize non-generic adelic point groups, we need elliptic curves
$E/K$ and primes $\ell$ for which the tower of $\ell$-power division fields
\begin{equation}
\label{eqn:elltowerbis}
Z_{E/K}(\ell) \subset Z_{E/K}(\ell^2) \subset Z_{E/K}(\ell^3)
\subset \cdots \subset Z_{E/K}(\ell^k) \subset \cdots
\end{equation}
from \eqref{eqn:elltower} does \emph{not} have strict inclusions at every level.

To ease notation, we write $G_{\ell^k}=\Gal(Z_{E/K}(\ell^k)/K)$
for the Galois group over~$K$ of the $\ell^k$-division field, and 
$M_{\ell^k}=E[\ell^k](\bar K)$ for the group of $\ell^k$-torsion
points of~$E(\bar K)$.
As $M_{\ell^k}$ is free of rank 2 over $\Zee/\ell^k\Zee$ and
$G_{\ell^k}$ acts faithfully on $M_{\ell^k}$, we have an inclusion
\begin{equation}
\label{ell-k-action}
G_{\ell^k}\subset \Aut(M_{\ell^k})\iso\GL_2(\Zee/\ell^k\Zee),
\end{equation}
and we can view $\lim\limits_{\leftarrow k} G_{\ell^k}$ as a subgroup of 
$\Aut(\lim\limits_{\to k} M_{\ell^k})\iso\GL_2(\Zee_\ell)$.

The Galois group of the $(k-1)$-st extension in the tower 
\eqref{eqn:elltowerbis} is the $\ell$-group arising as the kernel
\begin{equation}
\label{eqn:elltowerkernels}
K_{\ell^k}=\ker [G_{\ell^k}\to G_{\ell^{k-1}}] \qquad (k\ge2)
\end{equation}
of the surjection induced by the restriction map
$\varphi_{\ell^k}: \Aut(M_{\ell^k})\to \Aut(M_{\ell^{k-1}})$,
so we have 
\begin{equation}
\label{eqn:elltowerkernelequiv}
K_{\ell^{k+1}}= 1 \Longleftrightarrow
Z_{E/K}(\ell^k) = Z_{E/K}(\ell^{k+1}).
\end{equation}
Triviality of the kernels $K_{\ell^{k+1}}$ needed to obtain
non-generic point groups can
only arise for a finite number of initial values of $k\ge1$,
with $\ell=2$ playing a special role.
\begin{proposition}
\label{prop:towerkernels}
Let $\ell$ be an odd prime, and suppose 
$K_{\ell^N}\ne1$ for some $N\ge2$.
Then we have $K_{\ell^k}\ne1$ for all $k>N$.
For $\ell=2$, the same is true if we have $N\ge3$.
\end{proposition}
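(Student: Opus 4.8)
The plan is to work entirely inside $\GL_2(\Zee_\ell)$, viewing $G:=\lim_{\leftarrow k}G_{\ell^k}$ as a closed subgroup and $K_{\ell^k}$ as the kernel of the reduction $G_{\ell^k}\to G_{\ell^{k-1}}$. The key structural fact I would use is that the congruence filtration of $\GL_2(\Zee_\ell)$ by the subgroups $1+\ell^j M_2(\Zee_\ell)$ is, for $\ell$ odd (resp.\ for $\ell=2$ and $j\ge 2$), a \emph{uniform} filtration: the $\ell$-th power map sends $1+\ell^j M_2(\Zee_\ell)$ onto $1+\ell^{j+1}M_2(\Zee_\ell)$, and more concretely, if $g=1+\ell^j A\in G$ with $A\not\equiv 0\pmod\ell$, then $g^\ell=1+\ell^{j+1}A+\binom{\ell}{2}\ell^{2j}A^2+\cdots$ lies in $1+\ell^{j+1}M_2(\Zee_\ell)$ but \emph{not} in $1+\ell^{j+2}M_2(\Zee_\ell)$, precisely because the correction terms have valuation $\ge 2j+1 > j+1$ when $\ell$ is odd (resp.\ when $\ell=2$, $j\ge 2$, so that $2j \ge j+2$). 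First I would translate the hypothesis: $K_{\ell^N}\ne 1$ means there is $g\in G$ with $g\equiv 1\pmod{\ell^{N-1}}$ but $g\not\equiv 1\pmod{\ell^N}$, i.e.\ $g\in (1+\ell^{N-1}M_2(\Zee_\ell))\setminus(1+\ell^N M_2(\Zee_\ell))$, say $g=1+\ell^{N-1}A$ with $A\not\equiv 0\pmod\ell$.

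The main step is then to propagate this one level at a time by raising to the $\ell$-th power. Since $G$ is a group, $g^\ell\in G$, and by the uniformity computation above $g^\ell\in(1+\ell^N M_2(\Zee_\ell))\setminus(1+\ell^{N+1}M_2(\Zee_\ell))$ — here I need $N-1\ge 1$ for $\ell$ odd, which is exactly $N\ge 2$, and $N-1\ge 2$ for $\ell=2$, which is exactly $N\ge 3$, matching the two cases in the statement. The image of $g^\ell$ in $G_{\ell^{N+1}}$ is therefore congruent to $1$ modulo $\ell^N$ but nontrivial, so it lies in $K_{\ell^{N+1}}$ and witnesses $K_{\ell^{N+1}}\ne 1$. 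Iterating, $g^{\ell^{j}}$ witnesses $K_{\ell^{N+j}}\ne 1$ for every $j\ge 1$, which is the claim that $K_{\ell^k}\ne 1$ for all $k>N$. I would also note the equivalent phrasing via \eqref{eqn:elltowerkernelequiv}: once one inclusion $Z_{E/K}(\ell^{N-1})\subsetneq Z_{E/K}(\ell^N)$ is strict, all later ones are.

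The one point that genuinely needs care — and the place I expect to spend the most effort — is the binomial estimate for $\ell=2$. For an odd prime the term $\binom{\ell}{2}\ell^{2(N-1)}A^2$ and all higher terms visibly have valuation strictly larger than $N$ (using $\ell\mid\binom{\ell}{j}$ for $1\le j\le\ell-1$ and $2(N-1)\ge N$ when $N\ge 2$), so $g^\ell\equiv 1+\ell^N A\pmod{\ell^{N+1}}$ and the argument is clean. For $\ell=2$ one has $g^2=1+2^N A+2^{2(N-1)}A^2$, and one must check $2(N-1)\ge N+1$, i.e.\ $N\ge 3$, to conclude $g^2\equiv 1+2^N A\pmod{2^{N+1}}$ with $A\not\equiv 0\pmod 2$; this is exactly why $N=2$ is excluded for $\ell=2$. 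So the hard part is simply being scrupulous about these 2-adic valuations and confirming that the excluded case $\ell=2$, $N=2$ is a real obstruction to the argument (as the later examples in the paper will show, it is genuinely exceptional), rather than an artifact of the proof. Everything else is the standard fact that a procyclic subgroup generated by $g$ already produces all the required nontrivial kernels, with no further input about the elliptic curve needed.
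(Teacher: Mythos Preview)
Your proposal is correct and follows essentially the same route as the paper: lift a nontrivial element of $K_{\ell^N}$, write it as $1+\ell^{N-1}A$ with $A\not\equiv 0\pmod\ell$, raise to the $\ell$-th power, and use the binomial estimate to see that the result is $\equiv 1+\ell^N A\pmod{\ell^{N+1}}$, hence nontrivial in $K_{\ell^{N+1}}$; then iterate. The only cosmetic difference is that you work once and for all in $G\subset\GL_2(\Zee_\ell)$, whereas the paper lifts to the finite level $G_{\ell^k}$ and computes in $\End(M_{\ell^k})$; the valuation bookkeeping is identical. One small point to tighten: your parenthetical ``using $\ell\mid\binom{\ell}{j}$ for $1\le j\le\ell-1$'' does not cover the top term $j=\ell$, so you should add the separate check $\ell(N-1)\ge N+1$ for $\ell\ge 3$, $N\ge 2$ (as the paper does).
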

\begin{proof}
Write $\sigma_N\in K_{\ell^N}\setminus\{1\}$ for $N\ge2$ as
$\sigma_N=1+\ell^{N-1}x$ with $x\ne \ell y\in \End(M_{\ell^N})$.
If $\sigma_k\in G_{\ell^k}$ for $k>N$ maps to $\sigma_N$
under the restriction map $G_{\ell^k}\twoheadrightarrow G_{\ell^N}$, 
we have $\sigma_k=1+\ell^{N-1}x_k$ with
$x_k\ne \ell y_k\in \End(M_{\ell^k})$, and
\begin{equation}
\sigma_k^\ell =
1+ \ell^N x_k + \sum_{i=2}^\ell {\ell\choose i} \ell^{i(N-1)}x_k^i =
1+ \ell^N x_k \in \End(M_{\ell^k}).
\end{equation}
Indeed, the number of factors $\ell$ in the coefficient
${\ell\choose i} \ell^{i(N-1)}$ is for $i=2, 3, \ldots, \ell-1$ at least
$1+2(N-1)\ge N+1$ if we have $N\ge 2$,
and for $i=\ell$ in the final coefficient $\ell^{\ell(N-1)}$ it is
$\ell(N-1)\ge N+1$ if we either have $\ell\ge3, N\ge 2$ or
$\ell=2, N\ge 3$.
Assuming we are in this situation, we see that $\sigma_k^\ell$ is in
$K_{\ell^{N+1}}\setminus\{1\}$.
Repeating the argument, we find that
if $\sigma_k$ is raised $k-N$ times
to the power $\ell$, we end up with an element
$1+\ell^{k-1}x_k\in K_{\ell^k}\setminus\{1\}$, showing $K_{\ell^k}\ne 1$.
\end{proof}

\noindent
In view of Proposition \ref{prop:towerkernels}, it makes sense to focus 
on the kernels $K_{\ell^2}$ and $K_8$ in~\eqref{eqn:elltowerkernels}.
\begin{proposition}
\label{prop:oddprimepowerdivfields}
Suppose $K$ is a number field linearly disjoint
from the $\ell^2$-th cyclotomic field $\Que(\zeta_{\ell^2})$,
with $\ell$ an odd prime.
Then for all elliptic curves $E/K$,
the tower \eqref{eqn:elltowerbis} has strict inclusions at all levels.
\end{proposition}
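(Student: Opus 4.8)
The plan is to reduce the statement, via Proposition~\ref{prop:towerkernels} applied with $N=2$, to the single assertion that $K_{\ell^2}\neq 1$ for every elliptic curve $E/K$; by the equivalence~\eqref{eqn:elltowerkernelequiv} this is the same as $Z_{E/K}(\ell)\subsetneq Z_{E/K}(\ell^2)$, and once we have it, Proposition~\ref{prop:towerkernels} upgrades it to $K_{\ell^k}\neq 1$ for \emph{all} $k\geq 2$, which is exactly strictness at every level of the tower~\eqref{eqn:elltowerbis}. To prove $Z_{E/K}(\ell)\subsetneq Z_{E/K}(\ell^2)$, I would first invoke the Weil pairing $E[\ell^2]\times E[\ell^2]\to\mu_{\ell^2}$: being Galois-equivariant and surjective, it shows $K(\zeta_{\ell^2})\subseteq Z_{E/K}(\ell^2)$. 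Since $K$ is linearly disjoint from $\Que(\zeta_{\ell^2})$ we have $[K(\zeta_{\ell^2}):K]=[\Que(\zeta_{\ell^2}):\Que]=\ell(\ell-1)$, so in particular $K(\zeta_\ell)\subsetneq K(\zeta_{\ell^2})$. Now suppose, for contradiction, that $Z_{E/K}(\ell)=Z_{E/K}(\ell^2)$. Then the abelian extension $K(\zeta_{\ell^2})/K$ of degree $\ell(\ell-1)$ lies inside $Z_{E/K}(\ell)$, so the group $G_\ell=\Gal(Z_{E/K}(\ell)/K)$ has a cyclic quotient of order $\ell(\ell-1)$; in particular $\ell\mid\#G_\ell^{\ab}$. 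On the other hand $K$ is linearly disjoint from $\Que(\zeta_\ell)\subseteq\Que(\zeta_{\ell^2})$ as well, so $\det\circ\rho_{E,\ell}\colon G_\ell\to\FF_\ell^*$ is surjective, i.e.\ $G_\ell\subseteq\GL_2(\FF_\ell)$ has full determinant.

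Everything then rests on a purely group-theoretic claim that I would isolate: \emph{for an odd prime $\ell$, a subgroup $G\subseteq\GL_2(\FF_\ell)$ with $\det(G)=\FF_\ell^*$ has $\#G^{\ab}$ coprime to~$\ell$.} To prove it, assume $\ell\mid\#G^{\ab}$, so $G$ surjects onto $\Zee/\ell\Zee$. Put $S=G\cap\SL_2(\FF_\ell)$; then $G/S$ embeds in $\FF_\ell^*$ and has order prime to $\ell$, so the surjection $G\twoheadrightarrow\Zee/\ell\Zee$ cannot be trivial on $S$ and therefore restricts to a surjection $S\twoheadrightarrow\Zee/\ell\Zee$. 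In particular $\ell\mid\#S$, so $S$ contains a transvection, and by the standard dichotomy (going back to Dickson) for subgroups $S\subseteq\SL_2(\FF_\ell)$ of order divisible by $\ell$, either $S=\SL_2(\FF_\ell)$, or $S$ lies in a Borel subgroup of $\SL_2(\FF_\ell)$, in which case $S$ contains the corresponding unipotent radical $U\iso\Zee/\ell\Zee$ as its unique subgroup of order $\ell$. In the first case full determinant forces $G=\GL_2(\FF_\ell)$, and a direct computation of the abelianization ($\GL_2(\FF_\ell)^{\ab}\iso\FF_\ell^*$ for $\ell\geq 5$, and $\iso\Zee/2\Zee$ for $\ell=3$) contradicts $\ell\mid\#G^{\ab}$. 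In the second case $U$ is characteristic in $S$ and hence normal in $G$, so $G$ is contained in the Borel subgroup $B\subseteq\GL_2(\FF_\ell)$ stabilizing the line fixed by $U$; then $[G,G]\subseteq[B,B]=U$, while $G/U$ embeds in the diagonal torus $B/U\iso\FF_\ell^*\times\FF_\ell^*$, whose order is prime to $\ell$. Thus $\#G^{\ab}$ is prime to $\ell$ unless $[G,G]\subsetneq U$; and a short commutator computation shows this last possibility forces the image of $G$ in the diagonal torus to consist of scalars, whence $\det(G)\subseteq(\FF_\ell^*)^2$ --- a proper subgroup of $\FF_\ell^*$, because $\ell$ is odd --- contradicting the hypothesis. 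Either way we reach a contradiction, which proves the claim and, with it, the Proposition.

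I expect the group-theoretic claim to be the only real obstacle: it needs the Borel-versus-whole-group dichotomy for subgroups of $\SL_2(\FF_\ell)$ of order divisible by $\ell$, together with a careful passage to abelianizations in each of the two cases, and it is precisely here that the oddness of $\ell$ enters, through the fact that $(\FF_\ell^*)^2$ is then a proper subgroup of $\FF_\ell^*$, which rules out the ``scalar torus'' degeneration. The remaining ingredients --- the Weil-pairing inclusion, the linear-disjointness degree bookkeeping, and the final appeal to Proposition~\ref{prop:towerkernels} --- are routine.
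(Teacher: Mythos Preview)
Your proof is correct, and takes a genuinely different route from the paper's.  Both arguments reduce via Proposition~\ref{prop:towerkernels} to showing $K_{\ell^2}\neq 1$, and both exploit that $K$ linearly disjoint from $\Que(\zeta_{\ell^2})$ makes the determinant $G_{\ell^2}\to(\Zee/\ell^2\Zee)^*$ surjective.  From there the paper argues with a single element: choose $c\in G_{\ell^2}$ with $\det(c)$ a generator of $(\Zee/\ell^2\Zee)^*$; if $\pi\colon G_{\ell^2}\to G_\ell$ were an isomorphism, then $\pi(c)\in\GL_2(\FF_\ell)$ would have order divisible by $\ell(\ell-1)$, so some power $s$ of $\pi(c)$ has order~$\ell$ and is hence unipotent; since $\pi(c)$ centralises $s$, its eigenvalues coincide and $\det(\pi(c))\in(\FF_\ell^*)^2$, contradicting the choice of~$c$.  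You instead recast the assumption as $\ell\mid\#G_\ell^{\ab}$ and prove an abstract lemma that no subgroup of $\GL_2(\FF_\ell)$ with full determinant can have this property, going through Dickson's dichotomy and handling the $\SL_2$ and Borel cases separately.  The endpoint of your Borel case is exactly the paper's punch line (centraliser of a unipotent has square determinant), but you arrive there only after disposing of the $G=\GL_2(\FF_\ell)$ case by a separate abelianisation computation.  The paper's argument is shorter and avoids the classification; your lemma, on the other hand, is a clean stand-alone statement about subgroups of $\GL_2(\FF_\ell)$.  (Incidentally, your case split for $\GL_2(\FF_\ell)^{\ab}$ is harmless but unnecessary: one has $[\GL_2(\FF_\ell),\GL_2(\FF_\ell)]=\SL_2(\FF_\ell)$ for every odd~$\ell$, so $\GL_2(\FF_\ell)^{\ab}\iso\FF_\ell^*$ uniformly.)
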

\begin{proof}
By Proposition \ref{prop:towerkernels}, it suffices to show that
$K_{\ell^2}=\ker[\pi: G_{\ell^2}\to G_\ell]$
is non-trivial for all elliptic curves $E/K$.
By the hypothesis on $K$, the determinant map
$G_{\ell^2}\mapright{\det} (\Zee/\ell^2\Zee)^*$ is surjective.
As $\ell$ is odd, we can pick $c\in G_{\ell^2}$ such that $\det(c)$
generates $(\Zee/\ell^2\Zee)^*$.
Applying $\pi$, we find that $\det(\pi(c))$ generates 
$\FF_\ell^*=(\Zee/\ell\Zee)^*$.

Suppose that $K_{\ell^2}$ is trivial, making $\pi$ an isomorphism.
Then the order of $\pi(c)$ equals the order
of $c$, which is divisible by the order $\ell(\ell-1)$ of
$(\Zee/\ell^2\Zee)^*$.
Let $s\in G_\ell$ be a power of $\pi(c)$ of order $\ell$.
Then $s\in G_\ell\subset \Aut(M_\ell)\iso \GL_2(\FF_\ell)$,
when viewed as a $2\times 2$-matrix over the field $\FF_\ell$,
is a non-semisimple matrix with double eigenvalue~1.
As $\pi(c)$ centralizes this element, its eigenvalues 
cannot be distinct, and we find that
$\det(\pi(c))$ is a square in $\FF_\ell^*$. Contradiction.
(This neat argument is due to Hendrik Lenstra.)
\end{proof}

\noindent
We can now show that, in contrast to Lemma \ref{lemma:elk=0},
there are only few ways in which adelic point groups of $E/K$
can be non-generic if we fix the base field $K$.
\begin{theorem}
\label{thm:finite-ellk}
Let $K$ be a number field. Then there exists a finite set
$\Sigma_K$ of powers of primes $\ell \mid 2\cdot\disc(K)$
such that for every elliptic curve $E/K$ and
for every prime power $\ell^k\notin \Sigma_K$,
the closure of torsion $T_{E/K}\subset E(\Ad_K)$ has 
infinite $\ell^k$-rank.
\end{theorem}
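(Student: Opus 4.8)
The plan is to split the exceptional prime powers into two groups. Let $\Sigma_K$ be the set of all prime powers $\ell^k>1$ for which there exists an elliptic curve $E/K$ with $T_{E/K}$ \emph{not} of infinite $\ell^k$-rank; then $\Sigma_K$ makes the assertion true by construction, so what must be shown is that $\Sigma_K$ is finite and that every $\ell^k\in\Sigma_K$ is a power of a prime dividing $2\cdot\disc(K)$. I would first dispose of the odd primes unramified in $K$, and then bound the exponents for each of the finitely many primes dividing $2\cdot\disc(K)$.

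\emph{Odd unramified primes.} Suppose $\ell$ is an odd prime with $\ell\nmid\disc(K)$, so that $\ell$ is unramified in $K$. Since $\Que\subset\Que(\zeta_{\ell^2})$ is abelian and totally ramified at $\ell$, every subfield of $\Que(\zeta_{\ell^2})$ other than $\Que$ is ramified at $\ell$; hence $K\cap\Que(\zeta_{\ell^2})=\Que$, and as $\Que(\zeta_{\ell^2})/\Que$ is Galois this means $K$ is linearly disjoint from $\Que(\zeta_{\ell^2})$. Proposition~\ref{prop:oddprimepowerdivfields} then applies to \emph{every} elliptic curve $E/K$, giving strict inclusions at all levels of the tower \eqref{eqn:elltowerbis}, so that $T_{E/K}$ has infinite $\ell^k$-rank for every $k\ge1$ by Lemma~\ref{lemma:divfieldcriterion}. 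Consequently no power of such an $\ell$ lies in $\Sigma_K$; that is, $\Sigma_K$ involves only the finitely many primes dividing $2\cdot\disc(K)$.

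\emph{Bounding the exponent at a fixed prime.} Now fix a prime $\ell$ and suppose $\ell^k\in\Sigma_K$, witnessed by an elliptic curve $E/K$. By the contrapositive of Lemma~\ref{lemma:divfieldcriterion} we have $Z_{E/K}(\ell^k)=Z_{E/K}(\ell^{k+1})$, i.e.\ $K_{\ell^{k+1}}=1$ in the notation of \eqref{eqn:elltowerkernels} and \eqref{eqn:elltowerkernelequiv}. Reading Proposition~\ref{prop:towerkernels} contrapositively, the vanishing of $K_{\ell^{k+1}}$ forces $K_{\ell^j}=1$ for all $j$ with $2\le j\le k+1$ when $\ell$ is odd, and for all $j$ with $3\le j\le k+1$ when $\ell=2$; equivalently the tower \eqref{eqn:elltowerbis} is constant from $Z_{E/K}(\ell)$ onward, respectively from $Z_{E/K}(4)$ onward. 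Hence $G_{\ell^{k+1}}=\Gal(Z_{E/K}(\ell^{k+1})/K)$ equals $G_\ell\subset\GL_2(\FF_\ell)$, respectively $G_4\subset\GL_2(\Zee/4\Zee)$, so $|G_{\ell^{k+1}}|$ is bounded by a constant depending only on $\ell$. On the other hand, $\det\circ\rho_{E,\ell^{k+1}}$ is the mod-$\ell^{k+1}$ cyclotomic character, so its image in $(\Zee/\ell^{k+1}\Zee)^*$ is $\Gal(K(\zeta_{\ell^{k+1}})/K)$ and $[K(\zeta_{\ell^{k+1}}):K]$ divides $|G_{\ell^{k+1}}|$; and since $K(\zeta_{\ell^{k+1}})=K\cdot\Que(\zeta_{\ell^{k+1}})$ we get $[K(\zeta_{\ell^{k+1}}):K]=\varphi(\ell^{k+1})/[\,K\cap\Que(\zeta_{\ell^{k+1}}):\Que\,]\ge\varphi(\ell^{k+1})/[K:\Que]$. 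Comparing the two bounds forces $\varphi(\ell^{k+1})\le[K:\Que]\cdot|G_{\ell^{k+1}}|$, which bounds $k$ in terms of $\ell$ and $[K:\Que]$. So only finitely many powers of $\ell$ lie in $\Sigma_K$, and together with the previous paragraph this proves $\Sigma_K$ is finite and consists of powers of primes dividing $2\cdot\disc(K)$.

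The only real subtlety I anticipate is the prime $\ell=2$: Proposition~\ref{prop:towerkernels} yields collapse of the division-field tower only from level $3$ upward, so one works with $\GL_2(\Zee/4\Zee)$ rather than $\GL_2(\FF_2)$, and the single value $\ell^k=2$, where the argument does not quite start, may simply be thrown into $\Sigma_K$. Everything else is bookkeeping: combining the tower-collapse dichotomy of Propositions~\ref{prop:towerkernels} and \ref{prop:oddprimepowerdivfields} with the elementary observation that $[K(\zeta_{\ell^{k+1}}):K]$ grows without bound while a collapsed tower keeps $|G_{\ell^{k+1}}|$ bounded. Note that, in contrast to the proof of Theorem~\ref{thm:almostall}, Serre's openness theorem plays no role here; the argument rests only on the cyclotomic character and the combinatorial Propositions~\ref{prop:towerkernels} and \ref{prop:oddprimepowerdivfields}, so elliptic curves with complex multiplication are covered as well.
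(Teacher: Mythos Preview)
Your proof is correct and follows essentially the same route as the paper: you invoke Proposition~\ref{prop:oddprimepowerdivfields} to confine $\ell$ to the primes dividing $2\cdot\disc(K)$, then use the contrapositive of Proposition~\ref{prop:towerkernels} to collapse the tower down to $Z_{E/K}(\ell)$ (or $Z_{E/K}(4)$ for $\ell=2$) and compare the resulting bound on $|G_{\ell^{k+1}}|$ against the growth of $[K(\zeta_{\ell^{k+1}}):K]$ coming from the cyclotomic character. The paper phrases the last step as ``$Z_{E/K}(\ell^{k+1})$ contains $\zeta_{\ell^{k+1}}$ while $[Z_{E/K}(\ell):K]<\ell^4$'', which is exactly your inequality $\varphi(\ell^{k+1})\le [K:\Que]\cdot|G_{\ell^{k+1}}|$ in different clothing.
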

\begin{proof}
Suppose there exists a prime power $\ell^k$ and
an elliptic curve $E/K$ for which $T_E$ does
not have infinite $\ell^k$-rank.
Then we have $K_{\ell^{k+1}}=1$ in \eqref{eqn:elltowerkernelequiv}
for the associated tower 
\eqref{eqn:elltowerbis}.
If $\ell$ is odd, $K$ is not linearly disjoint from $\Que(\zeta_{\ell^2})$
by Proposition \ref{prop:oddprimepowerdivfields},
so~$\ell$~divides $\disc(K)$.
This leaves us with finitely many possibilities for $\ell$.

If $\ell$ is odd,
we have $K_{\ell^N}=1$ for $2\le N \le k+1$ by Proposition 
\ref{prop:towerkernels}, hence 
\begin{equation}
\label{eqn:divfieldeq}
Z_{E/K}(\ell^{k+1}) = Z_{E/K}(\ell).
\end{equation}
As $Z_{E/K}(\ell^{k+1})$ contains a primitive $\ell^{k+1}$-st root of unity
and $Z_{E/K}(\ell)$ is of degree at most $\#\GL_2(\FF_\ell)<\ell^4$ over $K$,
we can effectively bound $k$, say by $3+\ord_\ell(n)$, for $K$ of degree $n$.
For $\ell^k=2^k>4$, the argument is similar, using 
$Z_{E/K}(2^{k+1}) = Z_{E/K}(4)$ instead of \eqref{eqn:divfieldeq}.
\end{proof}
The proof of Theorem \ref{thm:finite-ellk} is constructive and yields
a set $\Sigma_K$ of prime powers~$\ell^k$,
but it does not automatically yield the \emph{minimal} set $\Sigma^{\mn}_K$.
By Lemma \ref{lemma:elk=0},
the minimal set $\Sigma^{\mn}_K$ for $K$ can include any given set of prime powers 
if we take $K$ sufficiently large.
Finding $\Sigma^{\mn}_K$ for any given $K$ is a however a non-trivial matter.

For $K=\Que$, one can take $\Sigma_\Que$ containing only powers of $\ell=2$,
and simple Galois theory shows that $\Sigma_\Que=\{2, 4, 8\}$ 
is actually large enough: no equality
$$
Z_{E/\Que}(2^{k+1}) = Z_{E/\Que}(4)
$$
can hold for $k\ge4$, as $Z_{E/\Que}(2^{k+1})$ then contains a cyclic subfield
$\Que(\zeta_{32}^{\hphantom{-1}}+\zeta_{32}^{-1})$
of degree 8 over $\Que$, whereas
$G_4=\Gal(Z_{E/K}(4)/\Que)\subset \GL_2(\Zee/4\Zee)$ has no elements
of order divisible by 8.
It is relatively easy to show that $\Sigma_\Que$ does contain 2, as
there is the following classical construction of a family of elliptic 
curves $E/\Que$ for which we have $Z_{E/\Que}(2)=Z_{E/\Que}(4)$.
See also \cite{DLR}*{Theorem 1.7}.
\begin{proposition}
\label{prop:z2isz4}
For every $r\in\Que^*$, the elliptic curve 
$$
E_r:\quad y^2=x(x^2-2(1-4r^4)x + (1+4r^4)^2)
$$
has division fields $Z_{E_r/\Que}(2)=Z_{E_r/\Que}(4)=\Que(i)$.
Conversely, every elliptic curve $E/\Que$ with
$Z_{E/\Que}(2)=Z_{E/\Que}(4)=\Que(i)$ is $\Que$-isomorphic to $E_r$
for some $r\in\Que^*$.
\end{proposition}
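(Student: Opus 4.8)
The plan is to treat the two implications separately, in each case moving the rational $2$-torsion point to the origin so that $E$ acquires a model $y^2=x(x^2+ax+b)$, and using two standard facts about such a model: the duplication formula $x\bigl(2(x_0,y_0)\bigr)=(x_0^2-b)^2/4y_0^2$, and the classical criterion (from the full $2$-descent map, see \cite{SJH}) that over a field $F$ containing all roots $e_1,e_2,e_3$ of the cubic, the point $(e_1,0)$ lies in $2E(F)$ if and only if $e_1-e_2$ and $e_1-e_3$ are squares in $F^*$. For the first implication, observe that for $r\in\Que^*$ the quadratic factor of $x\bigl(x^2-2(1-4r^4)x+(1+4r^4)^2\bigr)$ has discriminant $-64r^4$, a nonzero element that is $-1$ times a square, so its roots $e_\pm=(1-4r^4)\pm4r^2i$ are distinct, irrational and generate $\Que(i)$; hence $E_r$ is an elliptic curve over $\Que$ with $Z_{E_r/\Que}(2)=\Que(i)$. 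Since doubling exhibits $E_r[4]$ as the disjoint union of the four $E_r[2]$-cosets lying over the points of $E_r[2]$, to get $Z_{E_r/\Que}(4)=\Que(i)$ it suffices that each nonzero $2$-torsion point is twice a point of $E_r(\Que(i))$. The point $(0,0)$ even has a rational half, namely $Q_0=\bigl(1+4r^4,\,4r^2(1+4r^4)\bigr)$, which lies on $E_r$ and whose $x$-coordinate squares to $b=(1+4r^4)^2$, so the duplication formula gives $x(2Q_0)=0$ and hence $2Q_0=(0,0)$. For $(e_+,0)$ the criterion applies with $e_+-0=(1+2r^2i)^2$ and $e_+-e_-=8r^2i=(2r+2ri)^2$, both squares in $\Que(i)^*$, so $(e_+,0)\in2E_r(\Que(i))$; the case $(e_-,0)$ follows by Galois conjugation. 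Thus $E_r[4]\subset E_r(\Que(i))$.

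For the converse, let $E/\Que$ satisfy $Z_{E/\Que}(2)=Z_{E/\Que}(4)=\Que(i)$ and let $\tau$ generate $\Gal(\Que(i)/\Que)$. As $\Gal(\Que(i)/\Que)$ acts faithfully on $E[4]$ we have $\tau^2=1$ on $E[4]$, and $\tau$ acts nontrivially on $E[2]$ since $Z_{E/\Que}(2)=\Que(i)\ne\Que$, hence as a transposition of the three nonzero $2$-torsion points; let $T\in E(\Que)$ be the unique $\tau$-fixed one. The key observation is that the $\Zee/4\Zee$-linear endomorphism $\tau+1$ of $E[4]$ has image inside $E[4]^\tau=E(\Que)[4]$, and that this image is \emph{not} contained in $E[2]$: otherwise $(\tau+1)(E[2])=(\tau+1)(2E[4])=2(\tau+1)(E[4])=0$, forcing $\tau=-1$ on $E[2]$, contrary to $\tau$ acting there as a transposition. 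Hence $E(\Que)$ contains a point $P$ of order $4$, necessarily with $2P=T$; moving $T$ to the origin, we write $E\colon y^2=x(x^2+ax+b)$ with $b\ne0$, $E[2]\subset E(\Que(i))$, and $P=(u,v)\in E(\Que)$ of order $4$ with $2P=(0,0)$.

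Now $x(2P)=(u^2-b)^2/4v^2=0$ together with $v^2=u(u^2+au+b)$ gives $b=u^2$ and $a=s^2-2u$, where $s=v/u\in\Que^*$, so that $a^2-4b=s^2(s^2-4u)$. The condition $Z_{E/\Que}(2)=\Que(i)$ forces this to be $-1$ times a nonzero square, hence $4u-s^2=c^2$ for some $c\in\Que^*$; then $u=(s^2+c^2)/4$, the irrational $2$-torsion points are $(e_\pm,0)$ with $e_\pm=(c\pm si)^2/4$, and $e_+-e_-=sci$. By the criterion, $Z_{E/\Que}(4)=\Que(i)$ forces $sci$ to be a square in $\Que(i)^*$; writing a square root as $\alpha+\beta i$ with $\alpha,\beta\in\Que$, so that $\alpha^2=\beta^2$ and $2\alpha\beta=sc$, this holds exactly when $sc=\pm2\rho^2$ for some $\rho\in\Que^*$. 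After replacing $P$ by $-P$ if necessary (which replaces $s$ by $-s$) we may take $sc=2\rho^2$, and then the $\Que$-isomorphism $(x,y)\mapsto(\mu^2x,\mu^3y)$ with $\mu=2/c$ carries $E$ to $y^2=x\bigl(x^2-2(1-4r^4)x+(1+4r^4)^2\bigr)=E_r$, where $r=\rho/c\in\Que^*$ and one uses $s^2/c^2=4r^4$. Conversely $E_r$ arises this way, with $s=4r^2$, $c=2$, $sc=8r^2=2(2r)^2$, so the two families coincide up to $\Que$-isomorphism.

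The step I expect to be the main obstacle is the converse reduction: producing a rational point of order $4$ via the $\tau+1$ argument, and then verifying carefully that the single squareness condition $sci\in(\Que(i)^*)^2$ is equivalent to $sc\in\pm2(\Que^*)^2$ and is compatible — together with the normalization $c=2$ — with the admissible rescalings $(x,y)\mapsto(\mu^2x,\mu^3y)$ and the sign choice for $P$. The remaining ingredients (the duplication formula on $y^2=x(x^2+ax+b)$, the identification of the $e_\pm$, and the squares exhibited over $\Que(i)$) are routine computations.
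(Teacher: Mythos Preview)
Your proof is correct, and the forward direction is essentially the paper's argument made explicit (the paper simply remarks at the end that the derivation shows $E_r$ has the stated division fields).

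For the converse you take a genuinely different route. The paper works entirely through the $2$-descent criterion: writing $E\colon y^2=x(x-\alpha)(x-\bar\alpha)$ with $\alpha\in\Que(i)\setminus\Que$, the condition $E[2]\subset 2E(\Que(i))$ translates into \emph{all} root differences being squares in $\Que(i)$, so $\alpha$ and $\alpha-\bar\alpha$ are squares; normalizing $\alpha=(1+qi)^2$ and observing that $4qi$ is a square forces $q=2r^2$, which gives $E_r$ directly. You instead begin with the representation-theoretic observation that $\tau+1$ on $E[4]$ cannot land in $E[2]$ (else $\tau$ would be $-1$ on $E[2]$, not a transposition), producing a \emph{rational} point $P$ of order~$4$; the duplication formula then pins down $b=u^2$ and $a=s^2-2u$, after which you invoke the descent criterion only for the single remaining condition that $e_+-e_-=sci$ be a square. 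The paper's path is shorter and more uniform; yours is longer but yields the extra structural fact $\Zee/4\Zee\subset E(\Que)$, which is exactly what is exploited (in the concrete form of the point $Q$) in the proof of Theorem~\ref{thm:Efam}.
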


\begin{proof}
Let $E/\Que$ be defined by a
Weierstrass equation $y^2=f(x)$, and suppose that we have
$Z_{E/\Que}(2)=Z_{E/\Que}(4)=\Que(i)$.
Then $f\in\Que[x]$ is a monic cubic polynomial with splitting field
$Z_{E/\Que}(2)=\Que(i)$, so $f$ has one rational root, and two complex
conjugate roots in $\Que(i)\setminus\Que$.
After translating $x$ over the rational root,
we may take 0 to be the rational root of $f$, leading to the model
\begin{equation}
\label{eqn:model}
E: f(x)=x(x-\alpha)(x-\bar\alpha)
\end{equation}
for some element $\alpha\in\Que(i)\setminus\Que$.
Note that each such $\alpha$ does define an elliptic curve over $\Que$,
and that the $\Que$-isomorphism class of $E$ depends on
$\alpha$ up to conjugation and up to multiplication by the square
of a non-zero rational number.

The equality $Z_{E/\Que}(4)=\Que(i)$ means that the 4-torsion of $E$
is defined over $\Que(i)$, or, equivalently, that the 2-torsion
subgroup $E[2](\Que(i))$ of $E$ is contained in $2\cdot E(\Que(i))$.
In terms of the complete 2-descent map
\cite{SJH}*{Proposition 1.4, p.~315} over $K=\Que(i)$, which embeds
$E(K)/2E(K)$ in a subgroup of $K^*/(K^*)^2\times K^*/(K^*)^2$,
the inclusion $E[2](\Que(i))\subset 2\cdot E(\Que(i))$ amounts to
the statement that all differences between the roots of $f$ are
squares in $\Que(i)$.
In other words, we have $Z_{E/\Que}(2)=Z_{E/\Que}(4)=\Que(i)$ if and only if
$\alpha$ and $\alpha-\bar\alpha$ are squares in $\Que(i)$.

Writing $\alpha=(a+bi)^2$ with $ab\ne0$, we can scale $a+bi$ inside
the $\Que$-isomorphism class of $E$ by an element of $\Que^*$,
and flip signs of $a$ and $b$.
Thus we may take $\alpha=(1+qi)^2$, with $q$ a positive rational number.
The fact that $\alpha-\bar\alpha=4qi=(q/2)(2+2i)^2$ is a square in
$\Que(i)$ implies that $q/2=r^2$ is the square of some $r\in\Que^*$.
Substituting $\alpha=(1+2ir^2)^2$ in the model
\eqref{eqn:model},
we find that $E$ is $\Que$-isomorphic to
\begin{equation}
\label{eqn:Er}
E_r:\quad y^2=x(x^2-2(1-4r^4)x + (1+4r^4)^2).
\end{equation}
As we have shown that $E_r$ does have
$Z_{E_r/\Que}(2)=Z_{E_r/\Que}(4)=\Que(i)$, this proves the theorem.
\end{proof}
As the $j$-invariant $j(E_r)$ of the elliptic curve
given by \eqref{eqn:Er} is a non-constant function of $r$, the
family $\{E_r/\Que\}_{r\in\Que^*}$ is non-isotrivial, and represents
infinitely many distinct isomorphism classes over $\bar\Que$.
This yields the following explicit version of Theorem \ref{thm:exceptions}.
\begin{theorem}
\label{thm:Efam}
Let $E_r$ be as above, and $K$ be a number field of degree $n$.
Then all elliptic curves $E_r/K$ with $r\in\Que^*$ have
adelic point groups $E_r(\Ad_K)$ that are not isomorphic to the
topological group~$\mathcal E_n$.
\end{theorem}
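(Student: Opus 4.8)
The plan is to show that for every $r\in\Que^*$ and every number field $K$ of degree~$n$ the torsion closure $T_{E_r/K}=\prod_{\gothp\le\infty}T_\gothp$ of \eqref{eq:T_{E/K}} has finite $\ell^k$-rank $e(2)$ at the prime power $\ell^k=2$ (in the sense of \eqref{eq:eellk}). This suffices: the group $\prod_{m=1}^\infty\Zee/m\Zee$ occurring in $\mathcal E_n$ has $e(2)=\omega$, and by \eqref{eq:adelicdecomp} together with the discussion at the end of Section~\ref{S:adelicpointgroups} the isomorphism type of the topological group $E_r(\Ad_K)$ determines both $n$ and the isomorphism type of $T_{E_r/K}$, so an isomorphism $E_r(\Ad_K)\iso\mathcal E_n$ would force $T_{E_r/K}\iso\prod_{m=1}^\infty\Zee/m\Zee$ and hence $e(2)=\omega$.

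I would use two features of the curves $E_r$. First, Proposition~\ref{prop:z2isz4} gives $Z_{E_r/\Que}(2)=Z_{E_r/\Que}(4)=\Que(i)$; since extending the ground field does not change the torsion points of $E_r$, this yields $Z_{E_r/F}(2)=Z_{E_r/F}(4)=F(i)$ for every field $F\supset\Que$, in particular for $F=K$ and for $F=K_\gothp$. Secondly, the Weierstrass model of $E_r$ has the rational $2$-torsion point $T_1=(0,0)$, the remaining two $2$-torsion points $T_2,T_3$ having $x$-coordinates $(1-4r^4)\pm 4r^2 i\in\Que(i)\setminus\Que$.

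Now fix a finite prime $\gothp$ of $K$. By Lemma~\ref{lemma:EK_p} the group $T_\gothp=E_r(K_\gothp)^\tor$ is finite, and its $2$-primary part, a finite subgroup of $E_r[2^\infty]\iso(\Que_2/\Zee_2)^2$, is isomorphic to $\Zee/2^{a}\Zee\times\Zee/2^{b}\Zee$ with $a\le b$; it has a cyclic direct summand of order~$2$ precisely when $a=1$ or $b=1$, and I claim neither occurs. If $i\in K_\gothp$, then $Z_{E_r/K_\gothp}(4)=K_\gothp(i)=K_\gothp$, so $E_r[4]\subset E_r(K_\gothp)$ and $a\ge2$. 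If $i\notin K_\gothp$, then $Z_{E_r/K_\gothp}(2)=K_\gothp(i)\ne K_\gothp$, so $E_r[2]\not\subset E_r(K_\gothp)$; as $T_1\in E_r(K_\gothp)$ this forces $E_r(K_\gothp)[2]=\{O,T_1\}$, hence $a=0$, and it remains to show that $E_r(K_\gothp)$ contains a point of order~$4$, which gives $b\ge2$. The Galois action on $E_r[4]$ factors through $\Gal(K_\gothp(i)/K_\gothp)=\langle\tau\rangle$, of order~$2$, and $\tau$ swaps $T_2$ and $T_3$ (if $\tau(T_2)=T_2$, then also $\tau(T_3)=\tau(T_1)+\tau(T_2)=T_1+T_2=T_3$, whence $E_r[2]\subset E_r(K_\gothp)$); in particular $E_r[2]^\tau=\{O,T_1\}$. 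Pick $P\in E_r[4]$ with $2P=T_1$. Then $2(\tau(P)-P)=\tau(T_1)-T_1=O$, so $\tau(P)-P\in E_r[2]$, and $\tau(\tau(P)-P)=\tau^2(P)-\tau(P)=P-\tau(P)=-(\tau(P)-P)=\tau(P)-P$, so $\tau(P)-P\in E_r[2]^\tau=\{O,T_1\}$. If $\tau(P)=P$, then $P\in E_r(K_\gothp)$ is a point of order~$4$; if $\tau(P)=P+T_1$, then $P+T_2\in E_r(K_\gothp)$ is one, since $\tau(P+T_2)=\tau(P)+\tau(T_2)=(P+T_1)+T_3=P+T_2$ (using $T_1+T_3=T_2$) and $2(P+T_2)=T_1\ne O$. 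Either way $b\ge2$, as claimed.

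Consequently no finite prime of $K$ contributes a cyclic direct summand of order~$2$ to $T_{E_r/K}=\prod_{\gothp\le\infty}T_\gothp$, so by \eqref{eq:T_p} the only contributions come from the at most $n$ real primes, at each of which $T_\gothp$ is $\Zee/2\Zee$ or trivial. Hence $e(2)\le n<\omega$, which completes the proof that $E_r(\Ad_K)$ is not isomorphic to $\mathcal E_n$. The crux — and the only step that is a genuine computation rather than a formal deduction — is the part of the third paragraph dealing with $i\notin K_\gothp$, namely that the order-$2$ Galois action on $E_r[4]$ always fixes a point of order~$4$ lying above $(0,0)$. This is exactly where one exploits the special form of $E_r$, arranged in Proposition~\ref{prop:z2isz4} so that $(0,0)$ is rational while the $2$- and $4$-division fields coincide; I expect this to be the main obstacle, even though it is short.
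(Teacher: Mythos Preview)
Your proof is correct and follows essentially the same strategy as the paper: show that $T_{E_r/K}$ fails to have infinite $2$-rank by treating finite primes according to whether $i\in K_\gothp$, and in the nontrivial case $i\notin K_\gothp$ producing a $K_\gothp$-rational point of order~$4$ above $(0,0)$. The only tactical differences are that the paper checks $\Delta(E_r)<0$ to get $e(2)=0$ rather than your $e(2)\le n$, and in the case $i\notin K_\gothp$ it constructs the rational $4$-torsion point directly as $Q=P+P^\sigma$ with $P$ chosen above a \emph{non}-rational $2$-torsion point, which sidesteps your case split.
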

\begin{proof}
We show that for any $r \in\Que^*$,
the torsion closure $T_{E_r/K}=\prod_\gothp T_\gothp$ from \eqref{eq:T_{E/K}}
has 2-rank equal to 0.
As $T_{E_r/K}$ is intrinsically defined as the closure of the torsion subgroup
of $E_r(\Ad_K)/E_{r,\cc}(\Ad_K)$, this implies that $E_r(\Ad_K)$ is not
isomorphic to $\mathcal E_n$.

As $E_r$ has by construction a non-complete 2-torsion subgroup
$E(\RR)[2]=\langle(0,0)\rangle$ over the unique archimedean completion
$\RR$ of $\Que$, its discriminant $\Delta(E_r)$ is a negative rational number.
By definition \eqref{eq:T_p}, we therefore have
$T_\gothp=1$ for all infinite primes $\gothp$ of $K$.

For $\gothp$ a finite prime of $K$, there are two cases.
If $K_\gothp$ contains $i$, and therefore $Z_{E_r/K}(4)=K(i)$,
the complete 4-torsion of $E_r$ is $K_\gothp$-rational,
and $E_r(K_\gothp)$ has no direct summand of order 2.
In the other case, where $K_\gothp$ does not contain~$i$, we have
$E(K_\gothp)[2]=\langle(0,0)\rangle$.
As all 4-torsion points of $E_r$ are $K_\gothp(i)$-rational, we can pick 
a point $P\in E_r(K_\gothp(i))$ of order 4 for which $2P$ is a 2-torsion
point $T\in E_r(K_\gothp(i))$ that is \emph{not} $K_\gothp$-rational. 
Write $\sigma$ for the non-trivial automorphism of
$K_\gothp\subset K_\gothp(i)$.
Then the point $Q=P+P^\sigma\in E_r[4](K_\gothp(i))$
is $K_\gothp$-rational and satisfies $2Q=T+T^\sigma=(0,0)$.
It follows that $E(K_\gothp)[4]=\langle Q\rangle\iso\Zee/4\Zee$
has no direct summand of order 2, and the same is then true for
$E(K_\gothp)$.
This shows that no $T_\gothp$ has a direct summand of order 2,
and completes the proof.
\end{proof}
\begin{remark}
It follows from the tables of Rouse and Zureick-Brown \cite{RZB}
and Proposition 3.4 in \cite{DLR}
that for \emph{all} elliptic curves $E/\Que$, the inclusion
$
Z_{E/\Que}(4) \subset Z_{E/\Que}(8)
$
is strict, and therefore, by 
Proposition \ref{prop:towerkernels}, that for such $E$
we have $K_{2^{k+1}}\ne 1$
in \eqref{eqn:elltowerkernelequiv} for all $k\ge2$.
This implies that for $K=\Que$,
$$
\Sigma^{\mn}_\Que=\{2\}
$$
is the minimal set of prime powers in Theorem \ref{thm:finite-ellk}.
\end{remark}
\begin{remark}
Both in Lemma \ref{lemma:elk=0} and in Theorem \ref{thm:Efam},
the only value of the $\ell^k$-rank of $T_{E/K}$
different from the generic value $e(\ell^k)=\omega$ is $e(\ell^k)=0$.
This is no coincidence, as there is the purely algebraic
fact that if a group $G$ acts on a free module $M$ of rank 2 over
$\Zee/\ell^{k+1}\Zee$ in such a way that the module of invariants
$M^G$ has a direct summand of order $\ell^k$, then there exists
an element $g\in G$ such that $M^{\langle g\rangle}$ has
a cyclic direct summand of order $\ell^k$.
Thus, if $E/K$ is an elliptic curve for which
$T_\gothp\subset T_{E/K}$ has a cyclic direct summand of order 
$\ell^k$ for a \emph{single} finite prime~$\gothp$,
then we are in the situation above, with
$G\subset G_{\ell^{k+1}}$ the decomposition group of~$\gothp$
acting on $M=M_{\ell^{k+1}}$ as in \eqref{ell-k-action}.
It then follows that for the infinitely many primes~$\gothp$ of~$K$
that are unramified in $K\subset Z_{E/K}(\ell^{k+1})$
with Frobenius element in $G_{\ell^{k+1}}$
conjugate to the element $g\in G$ above,
$T_\gothp$ also has a cyclic direct summand of order~$\ell^k$,
leading to the implication
$$
e(\ell^k)\ne 0 \Longrightarrow e(\ell^k)=\omega
$$
for the $\ell^k$-ranks of $T_{E/K}$.
\end{remark}

\bigskip


\begin{bibdiv}
\begin{biblist}

\bib{Ang}{book}{
   author={Angelakis, Athanasios},
   title        = {Universal adelic groups for imaginary quadratic number fields and elliptic curves},
   publisher    = {Leiden University \& Universit\'{e} Bordeaux},
   series       = {PhD Thesis},
   address      = {Leiden},
   year         = {2015},
   status    = {https://openaccess.leidenuniv.nl/handle/1887/34990}
} 

\bib{AngSt}{article}{
   author={Angelakis, Athanasios},
   author={Stevenhagen, Peter},
   title        = {Imaginary quadratic fields with isomorphic abelian
                   {G}alois groups},
   booktitle    = {ANTS X - Proceedings of the Tenth Algorithmic Number
                   Theory Symposium},
   publisher    = {Mathematical Sciences Publisher},
   address      = {Berkeley},
   year         = {2013},
   series       = {The Open Book Series},
   volume       = {1},
   pages        = {21–39},
   editor       = {Everett W. Howe and Kiran S. Kedlaya},
} 

\bib{DLR}{article}{
   author={Harris B. Daniels},
   author={\'Alvaro Lozano-Robledo},
   title        = {Coincidences of division fields},
   status       = {preprint, https://arxiv.org/abs/1912.05618},
}

\bib{CasselsFroehlich}{book}{
   title={Algebraic number theory},
   editor       = { J. W. S. Cassels and A. Fr\"{o}hlich},
   publisher={Academic Press, London},
   year={1967},
}

\bib{NeukirchSchmidtWingberg}{book}{
   author={Neukirch, J\"{u}rgen},
   author={Schmidt, Alexander},
   author={Wingberg, Kay},
   title={Cohomology of number fields},
   series={Grundlehren der Mathematischen Wissenschaften [Fundamental
   Principles of Mathematical Sciences]},
   volume={323},
   edition={2},
   publisher={Springer-Verlag, Berlin},
   date={2008},
}

\bib{Jones}{article}{
   author={Jones, Nathan},
   title={Almost all elliptic curves are Serre curves},
   journal={Trans. Amer. Math. Soc.},
   volume={362},
   date={2010},
   number={3},
   pages={1547--1570},
}

\bib{RZB}{article}{
   author={Rouse, Jeremy},
   author={Zureick-Brown, David},
   title={Elliptic curves over $\Que$ and 2-adic images of Galois},
   journal={Research in Number Theory},
   volume={1},
   date={2015},
   number={1},
   pages={1--34},
}

\bib{Serre}{article}{
   author={Serre, Jean-Pierre},
   title={Propri\'et\'es galoisiennes des points d'ordre fini des courbes
   elliptiques},
   journal={Invent. Math.},
   volume={15},
   date={1972},
   number={4},
   pages={259--331},
   issn={0020-9910},
}

\bib{SJH}{book}{
    AUTHOR = {Silverman, Joseph H.},
     TITLE = {The arithmetic of elliptic curves},
    SERIES = {Graduate Texts in Mathematics},
    VOLUME = {106},
   EDITION = {Second Edition},
 PUBLISHER = {Springer},
      YEAR = {2009},
     PAGES = {xx+513},
      ISBN = {978-0-387-09493-9},
}

\bib{Zywina}{article}{
    AUTHOR = {Zywina, David},
     TITLE = {Elliptic curves with maximal Galois action on their torsion
              points},
   JOURNAL = {Bull. Lond. Math. Soc.},
    VOLUME = {42},
      YEAR = {2010},
    NUMBER = {5},
     PAGES = {811--826},
}

\end{biblist}
\end{bibdiv}
\enddocument